\newtheorem{theorem}{Theorem}[section]
\newtheorem{lemma}[theorem]{Lemma}
\newtheorem{proposition}[theorem]{Proposition}
\theoremstyle{definition}
\theoremstyle{remark}
\newtheorem{remark}[theorem]{Remark}
\newcommand{\mysection}[1]{\section{#1}
\setcounter{equation}{0}}
\newcommand{\bR}{\mathbb R}
\newcommand\cL{\mathcal{L}}
\def\bH{\mathbb{H}}
\def\cH{\mathcal{H}}
\newcommand{\esssup}{\operatorname*{ess\,sup}}
\renewcommand{\epsilon}{\varepsilon}
\begin{document}
\title[Regularity criteria for MHD]{Regularity criteria for suitable weak solutions to the four dimensional incompressible magneto-hydrodynamic equations near boundary}

\author[X. Gu]{Xumin Gu}
\address[X. Gu]{School of Mathematical Sciences, Fudan University,
Shanghai 200433, People's Republic of China}
\email{xumingu11@fudan.edu.cn}

\begin{abstract}
In this paper, we consider suitable weak solutions of the four dimensional incompressible magneto-hydrodynamic equations. We give two different kind $\varepsilon$-regularity criteria. One only requires the smallness of scaling $L^{p,q}$ norm of $u$, another requires the smallness of scaling space time $L^2$ norm of $\nabla u$ and boundedness of scaling norm of $H$ or $\nabla H$. And as an application of the second kind criteria, we also prove that up to the boundary, the two-dimensional Hausdorff measure of the set of singular points is equal to zero.
\end{abstract}

\maketitle

\mysection{Introduction}
In this paper we consider the four dimensional incompressible magneto-hydrodynamic (MHD) equations:
\begin{equation}
\label{ns}
\left\{\begin{aligned}
u_t +u\cdot \nabla u -\Delta u + \nabla \Pi &= H\cdot \nabla H,\\
\nabla \cdot u &= 0,\\
H_t +u\cdot \nabla H -\Delta H- H\cdot \nabla u &=0,\\
\nabla \cdot H &=0.
\end{aligned}\right.
\end{equation}
in a cylindrical domain $Q_T\equiv\Omega\times(0,T)$, where $\Omega \subset \mathbb{R}^4$ is smooth. Here $u$ is the velocity vector, $H$ is the magnetic vector and $\Pi=p+\frac{|H|^2}{2}$ is the magnetic pressure. The boundary conditions of $u$ and $H$ are given as following:

\begin{equation}
\label{bd}
u=0,\quad \text{and} \quad H\cdot \nu=0, \nabla H\cdot \nu=0, \quad \forall\,\,x\in \partial\Omega,
\end{equation}
where $\nu$ is the outward unit normal vector along the boundary $\partial\Omega$. The boundary condition for $H$ is equivalent to the slip-condition described in \cite{kang_12} in three dimension.  The MHD equations usually describe the dynamics of the interaction of moving conducting fluids with electro-magnetic fields which are frequently observed in nature and industry, e.g., plasma liquid metals, gases (see \cite{David_01, Duvant_72}).  We are interested in the partial regularity of suitable weak solutions $(u,\Pi,H)$ to \eqref{ns}  up to the boundary.

We say that a pair of functions $(u,\Pi,H)$ is a suitable weak solution to \eqref{ns} in $Q_T$ with the boundary condition \eqref{bd} if $(u,H) \in L_{\infty}(0,T;L_2(\Omega))\cap L_{2}(0,T;W^1_2(\Omega))$ and $\Pi \in L_{3/2}(Q_T)$ satisfy \eqref{ns} in the weak sense and additionally the generalized local energy inequality holds for any non-negative functions $\psi_1,\psi_2 \in C^{\infty}(\bar\Omega\times(0,T])$ and $t\in (0,T]$:
\begin{align}
\nonumber&\esssup_{0<s\leq t}\int_{\Omega}(|u(x,s)|^2\psi_1(x,s)+|H(x,s)|^2\psi_2(x,s)) \,dx\\ \nonumber&\quad\quad\quad+2\int_{Q_t}(|\nabla u|^2\psi_1+|\nabla H|^2\psi_2) \,dx\,ds\\
\nonumber&\quad\leq \int_{Q_t}|u|^2(\partial_t\psi_1+\Delta \psi_1)+(|u|^2+2\Pi)u\cdot \nabla \psi_1 \,dx\, \\\nonumber&\quad\quad-2\int_{Q_t}H\cdot\nabla u\cdot H \psi_1\,dx\,ds-2\int_{Q_t}(H\cdot u)(H\cdot \nabla \psi_1)\,dx\,ds\\\nonumber&\quad\quad+\int_{Q_t}|H|^2(\partial_t\psi_2+\Delta \psi_2)+|H|^2 u\cdot \nabla \psi_2 \,dx\, ds\\&\quad\quad-2\int_{Q_t}(H\cdot u)(H\cdot \nabla \psi_2)\,dx\,ds-2\int_{Q_t}(H\cdot\nabla H)\cdot u \psi_2\,dx\,ds .
\label{energy}
\end{align}

One of our main results is that, for any suitable weak solution $(u,\Pi,H)$ to \eqref{ns} with the boundary condition \eqref{bd}, the two dimensional space-time Hausdorff measure of the set of singular points up to the boundary is equal to zero.

It was shown in \cite{Duvant_72} that weak solutions for MHD equations exist globally in time and in the two-dimensional case weak solutions become regular. In the three-dimensional case, Sermange \cite{Sermange_83} proved that if a weak
solution pair $(u, H)$ are additionally in $L_{\infty}(0,T;W^1_2(\bR^3))$, then $(u, H)$ become regular. However,  the question of the regularity and uniqueness of weak solutions to the MHD equations is still widely open. Meanwhile, as in the incompressible Navier--Stokes equations (see \cite{CKN_82,Gus_07,Tian_99,Scheffer_76,Scheffer_77,Sch82,struwe_88,Vasseur_07,Wolf_11}), many authors have studied regularity conditions and the partial regularity of suitable weak solutions. He and Xin \cite{He_05} presented some interior regularity conditions of suitable
weak solutions in terms of the scaled mixed norm of the velocity and the magnetic
field, and more new interior regularity conditions were presented by Kang and Lee in \cite{Kang_09}.
These interior regularity conditions require that the scaled norm of
the velocity is small and the scaled norm of the magnetic field is bounded. Hence, the authors in \cite{Kang_09} proposed the question: "Can the regularity of suitable weak solutions be ensured without the assumption
that the scaled norm of the magnetic field is bounded?" Wang and Zhang gave a positive answer to this question in \cite{Wang_13}, they proved the following local $\varepsilon$-regularity criteria, when
\begin{equation*}
\sup_{0<r<r_0}r^{1-\frac{3}{p}-\frac{2}{q}}\|u\|_{L_{p,q}(Q_r(z_0))}\leq \varepsilon
\end{equation*}
is satisfied for some $\varepsilon$ near a interior point $z_0$, then $z_0$ is regular. Kang and Kim extended this kind criteria to the boundary case in \cite{kang_12}. The main idea of these works is that the terms induced by $\Pi$ and $H\cdot\nabla H$ in the local energy inequality \eqref{energy} all contain the velocity $u$, if some scaled norm of $u$ is small, then these terms can be controlled by timing small parameter on scale-invariant quantities of the pressure and using an iteration method. We also refer the reader to \cite{Cao_10,Chen_07,Chen_08,Wu_02,Wu_04,Zhou_05} and the references therein for extended results.

For the four dimensional case, the problem is more super-critical and the compactness arguments in the blowup procedure used, for instance, in the original paper \cite{CKN_82} as well as \cite{Lin_98, Lady_99} break down. In \cite{Han_12}, Han and He studied the four dimensional incompressible MHD equations' partial regularity for the interior case. However, the partial regularity to the four dimensional incompressible MHD equations for the boundary case seems still to be open. Meanwhile,  similar problems for Navier--Stokes equations were studied in \cite{Dong_12, Dong_13,Dong_13b, WW13}. The main idea in \cite{Dong_12, Dong_13,Dong_13b} is to first establish a weak decay estimate of certain scale-invariant quantities, and then successively improve this decay estimate by a bootstrap argument and the elliptic or parabolic regularity theory, thus the proof do not involve any compactness argument. Motivated by these works, we study the four dimensional incompressible MHD equations' partial regularity for the boundary case in this paper by following the main idea in \cite{Dong_12, Dong_13,Dong_13b}. Our results extend Kang's boundary regularity result from three dimension to four dimension and extend \cite{Han_12}'s interior result to the boundary case.

Now we state our main results, where we use some notation introduced at the beginning of the corresponding sections.

First, we have the following three boundary $\varepsilon$-regularity criteria.
\begin{theorem}
\label{th2}
Let $\Omega$ be a domain in $\mathbb{R}^4$. Let $(u,\Pi,H)$ be a suitable weak solution of \eqref{ns} in $Q_T$ with the boundary condition \eqref{bd}.
There is a positive number $\epsilon_0$ satisfying the following property. Assume that for a point $z_0=(x_0,t_0)$, $\omega(z_0,R)=Q^{+}(z_0,R)$ for some small $R$, and for some $\rho_0 >0$ we have
\begin{equation*}
C_u(\rho_0)+C_H(\rho_0)+D(\rho_0) \leq \epsilon_0.
\end{equation*}
Then $u$ and $H$ are regular at $z_0$.
\end{theorem}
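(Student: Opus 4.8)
The plan is to adapt the compactness-free bootstrap scheme of Dong and collaborators (used there for the four-dimensional Navier--Stokes system) to the boundary value problem for \eqref{ns}. Alongside the cubic quantities $C_u,C_H$ and the pressure quantity $D$ appearing in the hypothesis, I introduce the energy quantities on the half-cylinder,
\[
A_w(r)=\esssup_{t_0-r^2<s\le t_0}\ r^{-2}\int_{B^+(x_0,r)}|w(x,s)|^2\,dx,\qquad E_w(r)=r^{-2}\int_{Q^+(z_0,r)}|\nabla w|^2\,dx\,ds,
\]
for $w\in\{u,H\}$, all normalized to be invariant under the parabolic scaling $w(x,t)\mapsto\lambda w(\lambda x,\lambda^2t)$. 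The objective is to show that smallness of $C_u(\rho_0)+C_H(\rho_0)+D(\rho_0)$ forces a suitable combination of these quantities to decay as $r\to0$; a Morrey/Campanato estimate then yields local boundedness of $u$ and $H$ near $z_0$, whence regularity follows from standard parabolic theory. Because the system is super-critical in dimension four, the CKN blow-up/compactness argument is unavailable, so the decay must be produced quantitatively and then improved by iteration.

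First I would extract energy control from the generalized local energy inequality \eqref{energy}. Choosing $\psi_1,\psi_2$ to be standard cut-offs adapted to $Q^+(z_0,r)$ — so that, thanks to the boundary conditions \eqref{bd}, the boundary integrals vanish — and estimating the right-hand side by Hölder's inequality, with the mixed magnetic terms $\int H\cdot\nabla u\cdot H\,\psi_1$ and $\int H\cdot\nabla H\cdot u\,\psi_2$ handled after integration by parts and their $\nabla u,\nabla H$ factors absorbed by Young's inequality into the dissipation on the left, I expect a bound of the schematic form
\[
A_u(r/2)+E_u(r/2)+A_H(r/2)+E_H(r/2)\ \le\ C\Big(C_u(r)+C_H(r)+C_u(r)^{1/3}D(r)^{2/3}\Big).
\]
Thus the energy at scale $r/2$ is controlled by the (small) cubic and pressure quantities at scale $r$.

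The crux is the pressure estimate near the flattened boundary (the hypothesis $\omega(z_0,R)=Q^+(z_0,R)$ puts us in the model half-space). One decomposes $\Pi=\Pi_1+\Pi_2$, where $\Pi_1$ solves the relevant boundary value problem with source $\partial_i\partial_j(u_iu_j-H_iH_j)$ on the half-ball and $\Pi_2$ is the remaining harmonic part. Calder\'on--Zygmund theory applied to $\Pi_1$, together with the mean-oscillation decay of the harmonic $\Pi_2$, gives, for small $\theta\in(0,1)$,
\[
D(\theta r)\ \le\ C\,\theta^{\tau}D(r)+C\,\theta^{-\sigma}\big(C_u(r)+C_H(r)\big)
\]
for suitable exponents $\tau,\sigma>0$; here the condition $H\cdot\nu=0$ ensures the magnetic contribution to $\Pi$ behaves, on the half-space, exactly like the velocity contribution. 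Combining this with the critical four-dimensional interpolation $C_w(r)\le C\,A_w(r)^{1/2}E_w(r)$ (a consequence of the embedding $L^\infty_tL^2_x\cap L^2_t\dot H^1_x\hookrightarrow L^3_{t,x}$ in dimension four, modulo lower-order terms for $H$) and the energy estimate above yields a \emph{weak decay estimate}: once $C_u+C_H+D$ is small at scale $\rho_0$, the quantity $\Phi(r)=C_u(r)+C_H(r)+D(r)$ satisfies $\Phi(\theta r)\le C\,\theta^{\alpha}\,\Phi(r)$ for some $\alpha>0$ after $\theta$ and then $\epsilon_0$ are fixed small.

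Finally I would run the bootstrap. Iterating the weak decay from $r=\rho_0$ gives $\Phi(\theta^k\rho_0)\le(C\theta^{\alpha})^k\Phi(\rho_0)$, an initial algebraic decay of the scale-invariant quantities. Feeding this into the Duhamel/heat-kernel representation of $u$ and $H$ — treating $u\cdot\nabla u-H\cdot\nabla H$ and $u\cdot\nabla H-H\cdot\nabla u$, together with $\nabla\Pi$, as forcing — and invoking boundary parabolic and elliptic regularity, one successively upgrades the decay exponent until a sub-critical Morrey norm of $u$ and $H$ is controlled, which implies H\"older continuity, hence boundedness, near $z_0$. I expect the genuine difficulty to be two-fold: making the pressure decay and the interpolation compatible so that a \emph{net} positive power of $\theta$ survives despite the super-critical four-dimensional scaling (precisely where compactness fails), and carrying the boundary estimates for the coupled magnetic terms through each step of the bootstrap without losing the smallness gained at the previous step.
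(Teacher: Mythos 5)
Your overall architecture (energy inequality to control $A+E$ by $C_u+C_H+D$, a decay estimate for the pressure, the interpolation $C_w\lesssim A_w^{1/2}E_w$, an initial weak decay followed by a bootstrap and parabolic regularity) matches the paper's scheme. But there is a genuine gap at the single most delicate point: the pressure estimate near the boundary. You propose the interior-type decomposition $\Pi=\Pi_1+\Pi_2$ with $\Pi_1$ solving the Poisson problem with source $\partial_i\partial_j(u_iu_j-H_iH_j)$ via Calder\'on--Zygmund and $\Pi_2$ a harmonic remainder with good mean-oscillation decay. This fails on the half-ball: the pressure carries no usable boundary condition on the flat part (formally one would need $\partial_\nu\Pi=\Delta u\cdot\nu+\dots$, which is not controlled), so neither the CZ estimate for $\Pi_1$ nor the interior-type decay for a ``harmonic'' $\Pi_2$ is available. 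The paper instead uses Seregin's decomposition through the time-dependent Stokes system (Lemmas \ref{lest_1} and \ref{lest_2}): $v,p_1$ solve an initial--boundary value problem with force $\tilde f=-u\cdot\nabla u+H\cdot\nabla H$, and the remainder $(w,p_2)$ solves the homogeneous Stokes system near the flat boundary, for which local regularity only bounds $\nabla p_2$ by lower-order norms of $u$, $\nabla u$ and $\Pi$. The resulting estimate \eqref{D_est} therefore contains the slow-decay terms $\gamma^{9/4}\bigl(A_u^{3/4}(\rho)+E_u^{3/4}(\rho)\bigr)$ in addition to the terms you predict.

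This is not a cosmetic difference: those extra terms destroy your claimed one-step contraction $\Phi(\theta r)\le C\theta^{\alpha}\Phi(r)$ for $\Phi=C_u+C_H+D$, because $A_u^{3/4}+E_u^{3/4}$ decays only like the $3/4$ power of the energy and is not multiplied by any small parameter. The paper has to run the initial decay (Proposition \ref{prop2}) with radii $\rho_k=\rho_1^{(1+\beta)^k}$ and, precisely because of these terms, must estimate $D(\rho_{k+1})$ by reaching back to scale $\rho_{k-2}$ rather than $\rho_k$; a single-step iteration provably does not close with the exponent $\beta=1/200$ used there. Your final bootstrap sketch is also missing one quantitative point the paper is careful about: the bootstrap on scale-invariant quantities saturates at decay exponent $2$ for $E_u$, which in dimension four is exactly critical for Campanato, so one must pass through the heat-equation comparison and the Poincar\'e-type inequality for parabolic equations (Lemma \ref{lem11.31}) to gain the missing $\delta$ in the $L_{3/2}$-mean-oscillation decay. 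These two repairs --- replacing the elliptic pressure splitting by the Stokes-system splitting and redesigning the iteration to absorb the slow-decay pressure terms --- are the substance of the boundary case and are not supplied by your argument.
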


\begin{theorem}
\label{lpq}
Let $\Omega$ be a domain in $\mathbb{R}^4$.  Let $(u,\Pi,H)$ be a suitable weak solution of \eqref{ns} in $Q_T$ with the boundary condition \eqref{bd}. Suppose that for every pair $(p,q)$ satisfying $\frac{4}{p}+\frac{2}{q}\leq 2$, $2<q\leq\infty$, and $(p,q)\neq (2,\infty)$, there is a positive number $\epsilon_0$ depending only on $p,q$ and satisfying the following property. Assume that for a point $z_0=(x_0,t_0)$, $\omega(z_0,R)=Q^{+}(z_0,R)$ for some small $R$ and the inequality
\begin{equation*}
\limsup_{r\searrow 0}F_u^{p,q}(r) \leq \epsilon_0,
\end{equation*}
holds. Then $u$ and $H$ are regular at $z_0$.
\end{theorem}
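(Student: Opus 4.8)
The plan is to deduce the hypothesis of Theorem~\ref{th2} from the $L^{p,q}$-smallness assumption and then invoke that theorem. Thus the whole task reduces to exhibiting a single scale $\rho_0$ at which the scale-invariant quantity $C_u(\rho_0)+C_H(\rho_0)+D(\rho_0)$ falls below the threshold $\epsilon_0$ of Theorem~\ref{th2}. Since the assumption controls only $u$, the heart of the matter is to propagate this smallness to the magnetic field and the pressure through the coupled structure of \eqref{ns} and the local energy inequality \eqref{energy}.

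First I would control the velocity. Because $C_u(r)$ is comparable to $F_u^{3,3}(r)^{3}$ and the pair $(3,3)$ lies on the borderline $\frac4p+\frac2q=2$, I would combine H\"older in space-time with the Sobolev embedding $H^1(\mathbb{R}^4)\hookrightarrow L^4$ to bound $C_u(r)$ by a product of a power of $F_u^{p,q}(r)$ and a power of the scaled energy and dissipation quantities $A_u(r),E_u(r)$. A preliminary step is therefore to show, using \eqref{energy} with a cutoff at scale $r$, that $A_u+E_u$ and the magnetic analogues $A_H+E_H$ remain bounded on small cylinders: every dangerous term on the right of \eqref{energy} --- the pressure term $(|u|^2+2\Pi)u\cdot\nabla\psi_1$ and the magnetic couplings $H\cdot\nabla u\cdot H\,\psi_1$, $(H\cdot u)(H\cdot\nabla\psi_i)$, $|H|^2u\cdot\nabla\psi_2$ and $(H\cdot\nabla H)\cdot u\,\psi_2$ --- carries at least one factor of $u$, so the smallness of $F_u^{p,q}$ lets me absorb the gradient terms into the left-hand side and close a recursive bound. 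The interpolation then forces $C_u(r)\to 0$ as $r\searrow 0$.

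Next I would propagate the smallness to $H$ and to $\Pi$. The third equation of \eqref{ns} is parabolic with drift $u$ and reaction $H\cdot\nabla u$; testing \eqref{energy} against a cutoff $\psi_2$ and exploiting the parabolic smoothing of the heat part $\int|H|^2(\partial_t\psi_2+\Delta\psi_2)$ yields a decay inequality of the form $A_H(\theta r)+E_H(\theta r)\le C\theta^2\big(A_H(r)+E_H(r)\big)+(\text{terms controlled by }C_u(r))$, the coupling terms being absorbed precisely because they contain $u$. Iterating in $\theta$ forces $A_H+E_H$, hence $C_H$, to be small at small scales. For the pressure I would use the boundary decomposition adapted to \eqref{bd}: solving $-\Delta\Pi=\partial_i\partial_j(u_iu_j-H_iH_j)$ on the half-ball and splitting the solution into a Newtonian-potential part, controlled by $C_u+C_H$, and a harmonic remainder enjoying an interior decay $D(\theta r)\le C\theta^{\gamma}D(r)+C\big(C_u(r)+C_H(r)\big)$ for some $\gamma>0$. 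Feeding the velocity and magnetic decay into this estimate and iterating the coupled system produces a scale $\rho_0$ with $C_u(\rho_0)+C_H(\rho_0)+D(\rho_0)\le\epsilon_0$, whereupon Theorem~\ref{th2} gives the regularity of $u$ and $H$ at $z_0$.

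The main obstacle is the complete absence of any hypothesis on $H$: the magnetic quantity $C_H$ and the pressure quantity $D$ must be made small using only the coupled structure, so the velocity, magnetic and pressure decay estimates cannot be run in sequence but must be iterated simultaneously. This closure is delicate in four dimensions, where the problem is genuinely supercritical, the embedding $H^1\hookrightarrow L^4$ leaves no margin in the interpolations that control $C_u$ and $C_H$, and the boundary contribution in the pressure decomposition must be treated without the compactness arguments that break down in this dimension.
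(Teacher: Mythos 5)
Your overall strategy coincides with the paper's: reduce to Theorem~\ref{th2} by producing one scale $\rho_0$ at which $C_u+C_H+D$ is below threshold, and obtain that scale from a simultaneous iteration of the scale-invariant quantities in which every nonlinear term of \eqref{energy} is absorbed because it carries a factor of $u$ whose scaled norm is small. This is exactly what Proposition~\ref{prop3} does (via Lemma~\ref{eeeeee}, tested with the backward heat kernel, plus the interpolation bounding $[F_u^{2\kappa,2\lambda}]^2$ and $[F_H^{2\kappa,2\lambda}]^2$ by $A+E$), and your closing remark that the velocity, magnetic and pressure estimates must be iterated together rather than in sequence is the right diagnosis.

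The genuine gap is in your treatment of the pressure. You propose to solve $-\Delta\Pi=\partial_i\partial_j(u_iu_j-H_iH_j)$ on the half-ball and split $\Pi$ into a Newtonian-potential part plus a harmonic remainder with decay $D(\theta r)\le C\theta^{\gamma}D(r)+C\,(C_u(r)+C_H(r))$. That is the \emph{interior} argument; at a boundary point (and the hypothesis $\omega(z_0,R)=Q^{+}(z_0,R)$ places $z_0$ on $\partial\Omega$, which is the whole point of the theorem) the pressure does not satisfy a usable elliptic boundary-value problem decoupled from the velocity, so the harmonic-remainder decomposition has no legitimate boundary condition to decay against. The paper instead uses Seregin's decomposition through the time-dependent Stokes system: Lemma~\ref{lest_1} produces $(v,p_1)$ solving the Stokes problem with right-hand side $-u\cdot\nabla u+H\cdot\nabla H$, and Lemma~\ref{lest_2} gives higher regularity for the remainder $(w,p_2)$ solving the homogeneous Stokes system with zero velocity on the flat boundary. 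The resulting estimate \eqref{G_est} is \emph{not} of the clean form you wrote: it contains the additional slowly decaying terms $\gamma^{3-\frac{2}{\lambda}-\frac{4}{\kappa'}}\bigl(A_u^{1/2}(\rho)+E_u^{1/2}(\rho)\bigr)$, which are sublinear in the energy and force the more delicate coupled iteration in \eqref{ppppp} (with the auxiliary small weight $\epsilon'$ in front of $G^{\kappa,\lambda}$). Without replacing your pressure step by such a Stokes-system decomposition, the iteration you describe cannot be closed near the boundary.
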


\begin{remark}
Motivated by \cite{kang_12, Wang_13}, we proved this $\epsilon$-regularity criteria, which only requires that the scaled norm of the velocity is small.
\end{remark}

\begin{theorem}
\label{mainthm}
Let $\Omega$ be a domain in $\mathbb{R}^4$.  Let $(u,\Pi,H)$ be a suitable weak solution of \eqref{ns} in $Q_T$ with the boundary condition \eqref{bd}. There is a positive number $\epsilon_0$ satisfying the following property. Assume that for a point $z_0=(x_0,t_0)$, $\omega(z_0,R)=Q^{+}(z_0,R)$ for some small $R$ and the inequality
\begin{equation*}
\limsup_{r\searrow 0}E_u(r) \leq \epsilon_0,  \quad \sup_{0<r<r_1}A_H(r)<\infty \,\,\text{for some } r_1
\end{equation*}
or
\begin{equation*}
\limsup_{r\searrow 0}E_u(r) \leq \epsilon_0,  \quad \sup_{0<r<r_1}E_H(r)<\infty \,\,\text{for some } r_1
\end{equation*}
holds. Then $u$ and $H$ are regular at $z_0$.
\end{theorem}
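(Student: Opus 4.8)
The plan is to reduce to Theorem \ref{th2}: I will show that the hypotheses force the scale-invariant triple $C_u(\rho)+C_H(\rho)+D(\rho)$ to fall below the threshold $\epsilon_0$ of that theorem at some small scale $\rho_0$, whence $z_0$ is regular. The driving mechanism is the generalized local energy inequality \eqref{energy}, used in tandem with the no-slip condition $u=0$ on $\partial\Omega$. The latter is what lets us get away with only \emph{boundedness} of the magnetic quantities: via a boundary Poincar\'e inequality on the half-ball, control of $\nabla u$ transfers to control of $u$, and since every term in \eqref{energy} generated by $\Pi$ and by the magnetic coupling $H\cdot\nabla H$ carries a factor of $u$ or $\nabla u$, these terms inherit the smallness coming from $E_u$.

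First I would derive the basic decay (iteration) estimates. Choosing a scale ratio $\theta\in(0,1)$ and cutoffs $\psi_1,\psi_2$ adapted to $Q^{+}(z_0,\theta r)$, I apply \eqref{energy} to bound $A_u(\theta r)+E_u(\theta r)$ and $A_H(\theta r)+E_H(\theta r)$ by quantities at scale $r$. The cubic terms are recovered from the energy quantities by the Gagliardo--Nirenberg/Ladyzhenskaya interpolation in the half-cylinder, which in dimension four reads $\|u\|_{L_3}\leq C\|\nabla u\|_{L_2}^{2/3}\|u\|_{L_2}^{1/3}$ and yields $C_u(r)\leq CA_u(r)^{1/2}E_u(r)$; the cross terms $\int H\cdot\nabla u\cdot H\,\psi_1$, $\int(H\cdot u)(H\cdot\nabla\psi)$ and $\int(H\cdot\nabla H)\cdot u\,\psi_2$ are estimated by H\"older so that at least one factor of $\nabla u$ or $u$ (hence a small power of $E_u$) appears, the remaining magnetic factors being absorbed by the bounded quantity $A_H$ or $E_H$. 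The two alternative hypotheses differ only in which interpolation one uses at this point; the conclusion is the same.

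Next comes the pressure. Taking the divergence of the momentum equation turns \eqref{ns} into an elliptic problem for $\Pi$ with source $\partial_i\partial_j(u_iu_j-H_iH_j)$ and, on the flattened boundary, a Neumann-type condition. I would split $\Pi=\Pi_1+\Pi_2$, where $\Pi_1$ solves the inhomogeneous problem with the quadratic source and $\Pi_2$ is harmonic; Calder\'on--Zygmund theory controls $\Pi_1$ by the cubic quantities, while the mean-value decay of harmonic functions gives $\Pi_2$ a favorable power of $\theta$. This produces a decay inequality of the form $D(\theta r)\leq C\theta^{\alpha}D(r)+(\text{cubic terms at scale }r)$ with $\alpha>0$.

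Finally, assembling the energy and pressure inequalities yields a closed recursive system for $(C_u,C_H,D)$ at successive scales. A standard iteration then shows that, once $\limsup_{r\searrow 0}E_u(r)\leq\epsilon_0$ is small and the magnetic quantity is merely bounded, the sum $C_u(r)+C_H(r)+D(r)$ decays geometrically and in particular drops below $\epsilon_0$ at some $\rho_0$; Theorem \ref{th2} then gives regularity. The main obstacle I expect is closing this iteration \emph{at the boundary}: the pressure decomposition and the absorption of the cross terms must be carried out with boundary-adapted estimates, and one must verify that the mere boundedness (not smallness) of $A_H$ or $E_H$ never creates a term lacking a compensating small factor from $E_u$.
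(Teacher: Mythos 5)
Your overall reduction is the same as the paper's: Theorem \ref{mainthm} is obtained from Theorem \ref{th2} by showing (Proposition \ref{prop1}) that smallness of $E_u$ plus mere boundedness of $A_H$ or $E_H$ forces $A_u+E_u+C_u+A_H+E_H+C_H+D$ below the threshold at some scale, and your treatment of the energy inequality is essentially the paper's: the boundary Poincar\'e inequality converts control of $\nabla u$ into control of $u$, the interpolation $C_u(\rho)\le N A_u^{1/2}(\rho)E_u(\rho)$ is exactly \eqref{hhh}-type, and every term produced by $\Pi$ and by $H\cdot\nabla H$ in \eqref{energy} carries a factor of $u$ and hence a small power of $E_u$.

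The genuine gap is your pressure step. You propose taking the divergence of the momentum equation, solving $-\Delta\Pi=\partial_i\partial_j(u_iu_j-H_iH_j)$ by Calder\'on--Zygmund with a ``Neumann-type condition'' on the flattened boundary, and letting the remainder be harmonic with mean-value decay. This is the \emph{interior} decomposition and it does not survive at the boundary: the natural Neumann data is $\partial_\nu\Pi=\Delta u\cdot\nu+(H\cdot\nabla H)\cdot\nu$ on $\{x_4=0\}$, which involves second derivatives of $u$ and is not controlled by the energy quantities, so neither the CZ estimate for $\Pi_1$ nor the interior mean-value decay of the ``harmonic'' remainder is available up to the boundary. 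The paper explicitly flags this and instead uses Seregin's decomposition through the \emph{Stokes system}: one solves $\partial_t v-\Delta v+\nabla p_1=-u\cdot\nabla u+H\cdot\nabla H$ with Lemma \ref{lest_1} to absorb the quadratic source, and the remainder $(w,p_2)$ solves a homogeneous Stokes system near the flat boundary, for which the local boundary regularity of Lemma \ref{lest_2} yields $\nabla p_2\in L_{\kappa',\lambda}$ with $\kappa'$ large and hence the decay factor $\gamma^{3-2/\lambda-4/\kappa'}$ in \eqref{D_est}. Crucially, this estimate carries the extra slowly decaying terms $A_u^{1/2}(\rho)+E_u^{1/2}(\rho)$, which is why the iteration in Proposition \ref{prop1} must be run on the weighted sum $A+E+\epsilon' D$ with carefully chosen small parameters $\gamma,\epsilon',\epsilon_1$; your ``standard iteration'' for $(C_u,C_H,D)$ with a clean inequality $D(\theta r)\le C\theta^{\alpha}D(r)+(\text{cubic terms})$ glosses over exactly the point where the boundary case differs from the interior one. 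Until the pressure decay is established by a boundary-adapted decomposition of this kind, the iteration does not close.
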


\begin{remark}
\label{rm1}
Theorem \ref{mainthm} implies that if $\limsup\limits_{r\searrow 0}(E_u(r)+E_H(r)) \leq \epsilon_0,$ then $u$ and $H$ are regular at $z_0$.
\end{remark}

Our next result is regarding the partial regularity of suitable weak solutions up to the boundary.

\begin{theorem}
\label{th3}
Let $\Omega$ be a domain in $\mathbb{R}^4$ with uniform $C^2$ boundary. Let $(u,\Pi,H)$ be a suitable weak solution of \eqref{ns} in $Q_T$ with the boundary condition \eqref{bd}. Then up to the boundary, the 2D Hausdorff measure of the set of singular points is equal to zero.
\end{theorem}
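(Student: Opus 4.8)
\section*{Proof proposal for Theorem \ref{th3}}

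The plan is to deduce Theorem~\ref{th3} from the $\varepsilon$-regularity criterion of Theorem~\ref{mainthm}, in the combined form of Remark~\ref{rm1}, by a Vitali covering argument in the parabolic metric, following the Caffarelli--Kohn--Nirenberg scheme. Write $f=|\nabla u|^2+|\nabla H|^2$, which lies in $L_1(Q_T)$ since $(u,H)\in L_2(0,T;W^1_2(\Omega))$. Throughout, $\omega(z_0,r)$ denotes the full parabolic cylinder for an interior point $z_0$ and the half-cylinder $Q^+(z_0,r)$ for a boundary point, so that interior and boundary points are treated uniformly; for a boundary point the criterion is Theorem~\ref{mainthm} and for an interior point its interior analogue. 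The scale-invariant quantity governing the criterion is homogeneous of degree $2$ in the sense that
\[
E_u(r)+E_H(r)=\frac{1}{r^2}\int_{\omega(z_0,r)} f\,dz ,
\]
which is exactly invariant under the natural scaling $(u,\Pi,H)\mapsto(\lambda u,\lambda^2\Pi,\lambda H)(\lambda x,\lambda^2 t)$. By Remark~\ref{rm1}, any $z_0\in\bar\Omega\times(0,T]$ with $\limsup_{r\searrow 0}(E_u(r)+E_H(r))\le\epsilon_0$ is regular, so the singular set $S$ is contained in
\[
\Sigma\eqdef\Big\{z_0:\ \limsup_{r\searrow 0}\tfrac{1}{r^2}\textstyle\int_{\omega(z_0,r)}f\,dz>\epsilon_0\Big\}.
\]

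First I would show that the space-time Lebesgue measure of $\Sigma$ vanishes. The cylinders $\omega(z_0,r)$ shrink nicely, so at every Lebesgue point $z_0$ of $f$ one has $|\omega(z_0,r)|^{-1}\int_{\omega(z_0,r)}f\to f(z_0)<\infty$; since $|\omega(z_0,r)|\sim r^6$ in four space dimensions, this forces $r^{-2}\int_{\omega(z_0,r)}f\sim r^4 f(z_0)\to 0$. Hence $\Sigma$ is contained in the complement of the Lebesgue set of $f$, a null set, so $|\Sigma|=0$.

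Next comes the covering step, which upgrades this to a two-dimensional statement. Fix $\eta>0$. Since $f\in L_1(Q_T)$ and $|\Sigma|=0$, choose an open set $U\supseteq\Sigma$ with $\int_U f<\eta$. For each $z_0\in\Sigma$ and each $\delta>0$, the definition of $\Sigma$ yields some $r=r(z_0)\in(0,\delta)$ with $\omega(z_0,r)\subseteq U$ and $\int_{\omega(z_0,r)}f>\epsilon_0 r^2$. This is a fine (Vitali) cover of $\Sigma$ by parabolic cylinders; extracting, via the $5r$-covering lemma in the parabolic metric, a disjoint subfamily $\{\omega(z_i,r_i)\}$ whose dilations $\{\omega(z_i,5r_i)\}$ still cover $\Sigma$, I obtain
\[
\mathcal{P}^2_{5\delta}(\Sigma)\le\sum_i(5r_i)^2=25\sum_i r_i^2\le\frac{25}{\epsilon_0}\sum_i\int_{\omega(z_i,r_i)}f\le\frac{25}{\epsilon_0}\int_U f<\frac{25\,\eta}{\epsilon_0},
\]
where $\mathcal{P}^2$ is the two-dimensional Hausdorff measure for the parabolic metric. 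Sending $\delta\searrow 0$ and then $\eta\searrow 0$ gives $\mathcal{P}^2(\Sigma)=0$, hence $\mathcal{P}^2(S)=0$, which is the assertion.

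The main obstacle I anticipate is the uniform handling of interior and boundary points inside a single covering argument: the family $\{\omega(z_0,r)\}$ degenerates continuously from full cylinders to half-cylinders $Q^+$ as $z_0$ approaches $\partial\Omega$, and one must verify that the $5r$-covering lemma, the nice-shrinking property used for the density theorem, and the volume estimate $|\omega(z_0,r)|\sim r^6$ all hold with constants independent of $\mathrm{dist}(z_0,\partial\Omega)$. This is where the uniform $C^2$ regularity of $\partial\Omega$ enters, presumably through a boundary-flattening change of variables that keeps parabolic balls comparable to standard ones. A secondary point requiring care is that the normalization of $E_u$ and $E_H$ be genuinely homogeneous of degree $2$, so that the exponent in $\sum_i r_i^2$ matches the claimed dimension; any lower-order boundary correction terms appearing in the local energy inequality \eqref{energy} must be checked not to disturb this homogeneity.
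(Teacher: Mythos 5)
Your proposal is correct and follows essentially the same route as the paper: the paper deduces Theorem \ref{th3} from the combined criterion of Remark \ref{rm1} (extended to interior points and to $C^2$ boundaries) together with ``the standard argument in the geometric measure theory'' of \cite{CKN_82}, which is precisely the Vitali covering argument with $f=|\nabla u|^2+|\nabla H|^2\in L_1(Q_T)$ that you write out. The only difference is that you supply the details (Lebesgue differentiation to get $|\Sigma|=0$, the $5r$-covering lemma in the parabolic metric, and the uniform treatment of interior and boundary cylinders) that the paper leaves to the references.
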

We explain the main steps and the main difficulties to prove the main results slightly below. Our proofs mainly follow the scheme in \cite{Dong_13,Dong_13b} mentioned before. Firstly, we carry out the estimates for some scaled norms. The main strategy is that we control these scale-invariant quantities in a smaller ball by their values in a larger ball and use these estimates to build iteration scheme later. And as a result of this step, we can show that if the assumption of Theorem \ref{lpq} or Theorem \ref{mainthm} is satisfied, then the assumption of Theorem \ref{th2} is satisfied. Compared to the Navier-Stokes equations (\cite{Dong_13,Dong_13b}), the main difference in the MHD equations is to control the pressure $\Pi$ and the nonlinear term $H\cdot\nabla H$. As we explained before, motivated by \cite{kang_12, Wang_13}, we can control the terms induced by $\Pi$ and $H\cdot\nabla H$ in the local energy inequality \eqref{energy} with the assumption of Theorem \ref{lpq}. However, Theorem \ref{lpq} can not infer the optimal partial regularity result. In order to get the optimal partial regularity, we still need to get an $\epsilon$-regularity criteria which use scaled norms of $\nabla u$. In this case, the space-time $L_3$ scaled norm of $u$ may not small, and we will encounter difficulties to control the pressure $\Pi$ without any assumption of $H$. On the other hand, we find that after adding bounded constrains on scaled norm of $H$, we can control the pressure $\Pi$ by timing a small parameter and employing an iteration argument. Thus, in this way, it is possible for us to get the $\epsilon$-regularity criteria: Theorem \ref{mainthm}. The other difficulty in showing the partial regularity up to the boundary is that the estimate of $\Pi$ now contains slow decay terms by using a decomposition of the pressure $\Pi$ introduced by Seregin \cite{Seregin_02}, which is different from the interior case and then the method in \cite{Han_12} seems not be applicable. We will use the iteration argument in \cite{Dong_13b} to conquer this difficulty.

Secondly, we will use an iteration scheme to establish an initial decay estimate. And lastly, we improve
this decay estimate by a bootstrap argument, and apply the parabolic regularity
theory to get a good estimate of the $L_{3/2}$-mean oscillations of $u$ and $H$ and prove the main results.

We remark that by using the same method we can get an alternative proof of Kang, Kim's results \cite{kang_12} without using any compactness argument and our method also provides a different approach than \cite{Han_12} to prove interior partial regularity results. It remains an interesting open problem whether a similar result can be obtained for higher dimensional MHD equations ($d\geq 5$ for the time-dependent case).  It seems to
us that four is the highest dimension to which our approach (or any
existing approach) applies. In fact, by the embedding theorem
$$
L_{\infty}((0,T);L_2(\Omega))\cap L_2((0,T);W_2^1(\Omega))\hookrightarrow L_{2(d+2)/d}((0,T)\times \Omega),
$$
which implies nonlinear term in the energy inequality cannot
be controlled by the energy norm when $d \ge 5$.

This paper is organized as follows: In Section \ref{s1}, we introduce the notation of certain scale-invariant quantities and some settings which will be used throughout the paper. In Section \ref{s2}, we prove our results in three steps. In the first step, we give some estimates of the scale-invariant quantities, which are by now standard and essentially follow the arguments in \cite{Dong_07, Lin_98}. In the second step, we establish a weak decay estimate of certain scale-invariant quantities by using an iteration argument based on the estimates we proved in the first step. In the last step, we successively improve the decay estimate by a bootstrap argument, and apply parabolic regularity to get a good estimate of $L_{3/2}$-mean oscillations of $u,H$, which yields the H\"{o}lder continuity of $u,H$ according to Campanato's characterization of H\"older continuous functions.

\mysection{Notation and Settings}\label{s1}

In this section, we introduce the notation which will be used throughout this paper.  Let $\Omega$ be a domain in $\bR^4$, $-\infty\le S<T\le \infty$, and $m,n\in [1,\infty]$. We denote $L_{m,n}(\Omega\times (S,T))$ to be the usual space-time Lebesgue spaces of functions with the norm
\begin{align*}
\|f\|_{L_{m,n}(\Omega\times (S,T))}=(\int_S^T\|f\|_{L_m(\Omega)}^n\,dt)^{\frac{1}{n}}\quad\text{for}\quad n <+\infty,\\
\|f\|_{L_{m,n}(\Omega\times (S,T))}=\esssup_{t\in(S,T)}\|f\|_{L_m(\Omega)}\quad\text{for}\quad n = +\infty.
\end{align*}
We will also use the following Sobolev spaces:
\begin{align*}
W_{m,n}^{1,0}(\Omega\times (S,T))&=\Big\{f\,\Big|\, \|f\|_{L_{m,n}(\Omega\times (S,T))}+\|\nabla f\|_{L_{m,n}(\Omega\times (S,T))} < +\infty\Big\},\\
W_{m,n}^{2,1}(\Omega\times (S,T))&=\Big\{f\,\Big|\, \|f\|_{L_{m,n}(\Omega\times (S,T))}+\|\nabla f\|_{L_{m,n}(\Omega\times (S,T))}\\&\quad+\|\nabla^2 f\|_{L_{m,n}(\Omega\times (S,T))}+\|\partial_t f\|_{L_{m,n}(\Omega\times (S,T))} < +\infty\Big\}.
\end{align*}
Let $p\in (1,\infty)$. We denote $\cH_p^1$ to be the solution spaces for divergence form parabolic equations. Precisely,
$$
\cH^1_p(\Omega\times (S,T) )=
\{u: u,Du \in L_p(\Omega\times (S,T) ),\,u_t \in \bH^{-1}_p(\Omega\times (S,T) \},
$$
where $\bH^{-1}_p(\Omega\times (S,T) )$ is the space consisting of all generalized functions $v$ satisfying
$$
\inf \big\{\|f\|_{L_p(\Omega\times (S,T)) }+\|g\|_{L_p(\Omega\times (S,T)) }\,|\,v=\nabla\cdot g+f\big\}<\infty.
$$

We shall use the following notation of spheres, balls, parabolic cylinders and so onㄩ
\begin{align*}
&B(x_0,r)=\{x\in\mathbb{R}^4\,|\,|x-x_0|<r\},\ \ B(r)=B(0,r),\ \ B=B(1);\\
&B^{+}(x_0,r)=\{x\in B(x_0,r)\,|\,x=(x',x_4),\,x_4>x_{04}\},\\&B^{+}(r)=B^{+}(0,r),\ \ B^{+}=B^{+}(1);\\
&S^+(x_0,r)=\{x\in \bR^4\,|\,|x-x_0|=r,\,x=(x',x_4),\,x_4>x_{04} \};\\
&Q(z_0,r)=B(x_0,r) \times (t_0-r^2,t_0),\ \ Q(r)=Q(0,r),\ \ Q=Q(1);\\
&Q^{+}(x_0,r)=B^{+}(x_0,r) \times (t_0-r^2,t_0),\ \ Q^{+}(r)=Q^{+}(0,r),\ \ Q^{+}=Q^{+}(1);\\
&\Omega(x_0,r)=B(x_0,r)\cap \Omega,\ \ \omega(z_0,r)=Q(z_0,r)\cap Q_T,
\end{align*}
where $z_0=(x_0,t_0)$.

We also denote mean values of summable functions as follows:
\begin{align*}
[u]_{x_0,r}(t)&=\dfrac{1}{|\Omega|}\int_{\Omega(x_0,r)}u(x,t)\,dx,\\
(u)_{z_0,r}&=\dfrac{1}{|\omega|}\int_{\omega(z_0,r)}u\, dz,
\end{align*}
where $|A|$ as usual denotes the Lebesgue measure of the set $A$.


Now we introduce the following important quantities:
\begin{align*}
A_u(r)&=A_u(r,z_0)=\esssup_{t_0-r^2\leq t\leq t_0}\dfrac{1}{r^2}\int_{\Omega(x_0,r)}|u(x,t)|^2\,dx,\\
E_u(r)&=E_u(r,z_0)=\dfrac{1}{r^2}\int_{\omega(z_0,r)}|\nabla u|^2\,dz,\\
C_u(r)&=C_u(r,z_0)=\dfrac{1}{r^3}\int_{\omega(z_0,r)}|u|^3\,dz,\\
F_u^{p,q}(r)&=F_u^{p,q}(r,z_0)=\dfrac{1}{r^{\frac{4}{p}+\frac{2}{q}-1}}\bigg[\int_{t_0-r^2}^{t_0}(\int_{\Omega(x_0,r)}|u|^p\,dx)^{\frac{q}{p}}\,dt\bigg]^{\frac{1}{q}},\\
D(r)&=D(r,z_0)=\dfrac{1}{r^3}\int_{\omega(z_0,r)}|\Pi-[\Pi]_{x_0,r}|^{\frac 32}\,dz,\\
G^{\kappa,\lambda}&=G^{\kappa,\lambda}(r,z_0)=\dfrac{1}{r^{\frac{4}{\kappa}+\frac{2}{\lambda}-2}}\bigg[\int_{t_0-r^2}^{t_0}(\int_{\Omega(z_0,r)}|\Pi-[\Pi]_{x_0,r}|^{\kappa}\,dx)^{\frac{\lambda}{\kappa}})\,dt\bigg]^{\frac{1}{\lambda}},
\end{align*}
\begin{align*}
A_H(r)&=A_H(r,z_0)=\esssup_{t_0-r^2\leq t\leq t_0}\dfrac{1}{r^2}\int_{\Omega(x_0,r)}|H(x,t)|^2\,dx,\\
F_H^{p,q}(r)&=F_H^{p,q}(r,z_0)=\dfrac{1}{r^{\frac{4}{p}+\frac{2}{q}-1}}\bigg[\int_{t_0-r^2}^{t_0}(\int_{\Omega(x_0,r)}|H|^p\,dx)^{\frac{q}{p}}\,dt\bigg]^{\frac{1}{q}},\\
E_H(r)&=E_H(r,z_0)=\dfrac{1}{r^2}\int_{\omega(z_0,r)}|\nabla H|^2\,dz,\\
C_H(r)&=C_H(r,z_0)=\dfrac{1}{r^3}\int_{\omega(z_0,r)}|H|^3\,dz.\\
\end{align*}
Notice that all these quantities are invariant under the natural scaling:
$$
u_\lambda(x,t)=\lambda u(\lambda x,\lambda^2 t), \,\,
\Pi_\lambda(x,t)=\lambda^2 \Pi(\lambda x,\lambda^2 t),\,\,
H_\lambda(x,t)=\lambda H(\lambda x,\lambda^2 t).
$$
We shall estimate them in Section \ref{s2}. 


\mysection{The proof}\label{s2}
In the proofs below, we will make use of the following well-known interpolation inequality.
\begin{lemma}
\label{interpolation}
For any functions $u \in W_2^1(\mathbb{R}^4_{+})$, $u=0$ on $x_4=0$,  and real numbers $q\in [2,4]$ and $r >0$,
\begin{multline*}
\int_{B^{+}(r)}|u|^q \,dx \leq N(q)\Big[\big(\int_{B^{+}(r)}|\nabla u|^2 \, dx\big)^{q-2}\big(\int_{B^{+}(r)}|u|^2 \, dx\big)^{2-q/2}\\
+r^{-2(q-2)}\big(\int_{B^{+}(r)}|u|^2\,dx\big)^{q/2}\Big].
\end{multline*}
\end{lemma}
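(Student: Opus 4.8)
The plan is to reduce the inequality to the scale-invariant case $r=1$ and then split the argument into the endpoint exponent $q=4$ (the genuine Sobolev embedding) together with an elementary interpolation between $L_2$ and $L_4$. First I would exploit the homogeneity of the claimed estimate. Writing $v(x)=u(rx)$ on $B^+$, a change of variables shows that each of the three integrals rescales by an explicit power of $r$ and that the inequality for $u$ on $B^+(r)$ is \emph{equivalent} to the same inequality for $v$ on $B^+(1)=B^+$. Hence it suffices to establish, at $r=1$,
\[
\int_{B^+}|u|^q\,dx \le N(q)\Big[\big(\textstyle\int_{B^+}|\nabla u|^2\,dx\big)^{q-2}\big(\int_{B^+}|u|^2\,dx\big)^{2-q/2}+\big(\int_{B^+}|u|^2\,dx\big)^{q/2}\Big].
\]

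Second, I would handle the two endpoints. The case $q=2$ is immediate, since the second bracketed term already equals $\int_{B^+}|u|^2$. For $q=4$, which is the Sobolev exponent $2^\ast$ in dimension four, I would extend $u$ across the flat face $\{x_4=0\}$ by zero; this is legitimate precisely because $u=0$ there, so the extension lies in $W_2^1$ of the full ball $B$, and I may apply the Sobolev embedding $W_2^1\hookrightarrow L_4$ (equivalently the embedding $W_2^1(B^+)\hookrightarrow L_4(B^+)$ for the Lipschitz domain $B^+$). This gives $\|u\|_{L_4(B^+)}\le C\big(\|\nabla u\|_{L_2(B^+)}+\|u\|_{L_2(B^+)}\big)$; raising to the fourth power and using convexity of $t\mapsto t^4$ in the form $(a+b)^4\le 8(a^4+b^4)$ yields the desired $q=4$ estimate.

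Third, for the intermediate range $q\in(2,4)$ I would interpolate. With $\alpha=2-\tfrac4q\in(0,1)$ chosen so that $\tfrac1q=\tfrac{1-\alpha}{2}+\tfrac{\alpha}{4}$, Hölder's inequality gives $\|u\|_{L_q}\le\|u\|_{L_2}^{1-\alpha}\|u\|_{L_4}^{\alpha}$, hence $\|u\|_{L_q}^q\le\big(\int_{B^+}|u|^2\big)^{(4-q)/2}\,\|u\|_{L_4}^{2(q-2)}$. Inserting the $q=4$ estimate for $\|u\|_{L_4}^4$ and applying $(a+b)^s\le a^s+b^s$ with $s=\tfrac{q-2}{2}\in(0,1)$ splits the factor $\|u\|_{L_4}^{2(q-2)}=(\|u\|_{L_4}^4)^s$ into a sum of $\big(\int|\nabla u|^2\big)^{q-2}$ and $\big(\int|u|^2\big)^{q-2}$, up to the constant. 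Multiplying back by $\big(\int|u|^2\big)^{(4-q)/2}$ and adding the $L_2$-exponents — namely $\tfrac{4-q}{2}=2-\tfrac q2$ in the first resulting term and $(q-2)+\tfrac{4-q}{2}=\tfrac q2$ in the second — reproduces exactly the two terms on the right-hand side. Undoing the scaling of the first step then restores the factor $r^{-2(q-2)}$ in front of $\big(\int|u|^2\big)^{q/2}$.

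I do not expect a genuine obstacle: this is a classical Gagliardo–Nirenberg–Sobolev inequality adapted to the half-ball. The only two points that demand care are the zero-extension step at $q=4$, where the hypothesis $u=0$ on $\{x_4=0\}$ is what keeps the extended function in $W_2^1$ and thereby makes $N(q)$ depend on $q$ alone, and the exponent bookkeeping in the interpolation, where the elementary inequality $(a+b)^s\le a^s+b^s$ for $0\le s\le1$ is exactly what allows one to separate the gradient contribution from the lower-order term without producing cross terms.
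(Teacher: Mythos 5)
Your proof is correct: the scaling reduction to $B^+(1)$, the $q=4$ Sobolev endpoint via zero extension across $\{x_4=0\}$, and the $L_2$--$L_4$ interpolation with the subadditivity $(a+b)^s\le a^s+b^s$ for $s=(q-2)/2\in[0,1]$ all check out, and the exponent bookkeeping ($\,(4-q)/2=2-q/2$ and $(q-2)+(4-q)/2=q/2$) is right. The paper gives no proof of this lemma — it is quoted as a well-known interpolation (Gagliardo--Nirenberg) inequality — and your argument is exactly the standard one it implicitly relies on; the only cosmetic remark is that the unit-half-ball Sobolev embedding $W^1_2(B^+)\hookrightarrow L_4(B^+)$ holds for the Lipschitz domain $B^+$ even without the vanishing trace, so that hypothesis merely makes your zero-extension route the cleanest path.
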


Let $\cL:=\partial_t-\partial_{x_i}(a_{ij}\partial_{x_j})$ be a (possibly degenerate) divergence form parabolic operator with measurable coefficients which are bounded by a constant $K>0$. We will use the following Poincar\'e type inequality for solutions to parabolic equations. See, for instance, \cite[Lemma 3.1]{Krylov_05}.
\begin{lemma}
                    \label{lem11.31}
Let $z_0\in \bR^{d+1}$, $p\in (1,\infty)$, $r\in (0,\infty)$, $u\in \cH^1_{p}(Q^{+}(z_0,r))$, $g=(g_1,\ldots,g_d),f\in L_{p}(Q^{+}(z_0,r))$. Suppose that $u$ is a weak solution to $\cL u=\nabla\cdot g+f$ in $Q^{+}(z_0,r)$. Then we have
$$
\int_{Q^{+}(z_0,r)}|u(t,x)-(u)_{z_0,r}|^p\,dz\le Nr^p
\int_{Q^{+}(z_0,r)}\big(|\nabla u|^p+|g|^p+r^p|f|^p\big)\,dz,
$$
where $N=N(d,K,p)$.
\end{lemma}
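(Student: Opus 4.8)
The plan is to prove the normalized case $z_0=0$, $r=1$ and recover the general statement by the parabolic scaling $v(t,x)=u(t_0+r^2t,\,x_0+rx)$, which carries a weak solution of $\cL u=\nabla\cdot g+f$ on $Q^{+}(z_0,r)$ to a weak solution of an operator with the same bound $K$ on $Q^{+}(0,1)$, with right-hand side data $rg$ and $r^2f$. A routine change of variables shows that the powers $r^p$ and $r^{2p}$ appearing in the statement are exactly the scaling weights of the $g$- and $f$-terms, so it suffices to establish
\[
\int_{Q^+}|u-(u)_{0,1}|^p\,dz\le N\int_{Q^+}\big(|\nabla u|^p+|g|^p+|f|^p\big)\,dz .
\]
I stress that $\cL$ is only assumed to satisfy the upper bound $|a_{ij}|\le K$ with no ellipticity from below, so the argument must never invoke coercivity; I will use $a_{ij}$ solely through this upper bound.

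The key idea is to separate the space-time oscillation into a purely spatial part, handled slice by slice by the ordinary Poincar\'e inequality, and a temporal part, which is where the equation enters. Fix once and for all $\phi\in C_c^\infty(B^+)$ with $\int_{B^+}\phi\,dx=1$ and support \emph{strictly inside} $B^+$, and set $\bar u(t)=\frac{1}{|B^+|}\int_{B^+}u(t,x)\,dx$ and $u_\phi(t)=\int_{B^+}u(t,x)\phi(x)\,dx$. With $I=(-1,0)$ I decompose
\[
u(t,x)-(u_\phi)_I=\big(u(t,x)-\bar u(t)\big)+\big(\bar u(t)-u_\phi(t)\big)+\big(u_\phi(t)-(u_\phi)_I\big)
\]
and estimate the three pieces. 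For the first, the spatial Poincar\'e--Wirtinger inequality on the Lipschitz domain $B^+$ gives $\int_{B^+}|u-\bar u|^p\,dx\le N\int_{B^+}|\nabla u|^p\,dx$ at each time, and integration in $t$ bounds it by $\int_{Q^+}|\nabla u|^p$. For the second, since $\frac{1}{|B^+|}-\phi$ has zero mean I may subtract $\bar u(t)$ and apply H\"older to obtain $|\bar u(t)-u_\phi(t)|\le C\big(\int_{B^+}|u-\bar u|^p\,dx\big)^{1/p}$, so this term is again dominated by $\int_{Q^+}|\nabla u|^p$.

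The heart of the matter, and the step I expect to be the main obstacle, is the temporal piece. Because $u\in\cH^1_p(Q^+)$ has $u_t\in\bH^{-1}_p$, the scalar map $t\mapsto u_\phi(t)$ lies in $W^{1,p}(I)$, and testing the weak form of $\cL u=\nabla\cdot g+f$ against $\phi$ yields
\[
u_\phi'(t)=-\int_{B^+}a_{ij}\,\partial_{x_j}u\,\partial_{x_i}\phi\,dx-\int_{B^+}g\cdot\nabla\phi\,dx+\int_{B^+}f\phi\,dx .
\]
The decisive point is that $\phi$ is compactly supported in $B^+$, so this integration by parts produces \emph{no} boundary terms on either the flat or the spherical part of $\partial B^+$; this is precisely why the lemma requires no boundary condition on $u$ and is insensitive to the degeneracy of $\cL$. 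Bounding the right-hand side pointwise in $t$ by $\|\nabla\phi\|_\infty(K+1)\int_{B^+}(|\nabla u|+|g|)\,dx+\|\phi\|_\infty\int_{B^+}|f|\,dx$ and combining the one-dimensional Poincar\'e inequality $\int_I|u_\phi-(u_\phi)_I|^p\,dt\le N\int_I|u_\phi'|^p\,dt$ with Jensen's inequality gives a bound by $\int_{Q^+}(|\nabla u|^p+|g|^p+|f|^p)$. Summing the three pieces controls $\int_{Q^+}|u-(u_\phi)_I|^p$; finally, since $(u)_{0,1}$ is the genuine mean, Jensen's inequality yields $|Q^+|\,|(u)_{0,1}-(u_\phi)_I|^p\le\int_{Q^+}|u-(u_\phi)_I|^p$, so the arbitrary constant $(u_\phi)_I$ may be replaced by $(u)_{0,1}$, which completes the proof.
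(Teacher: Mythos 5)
Your proof is correct and complete: the decomposition into a slicewise spatial Poincar\'e estimate plus control of the temporal oscillation of $u_\phi$ via testing the equation with a fixed compactly supported $\phi$ (using only $|a_{ij}|\le K$, no ellipticity) is exactly the standard argument, and the scaling bookkeeping for the $r^p$ and $r^{2p}$ weights is right. Note that the paper does not prove this lemma at all --- it quotes it from Krylov (the cited Lemma 3.1) --- and your argument is essentially the one given in that reference, so there is nothing to compare beyond saying you have supplied the proof the paper omits.
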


Lastly, we recall the following two important lemmas which will be used to handle the estimate for the pressure $\Pi$.
\begin{lemma}
\label{lest_1}
Let $\Omega \subset \bR^4$ be a bounded domain with smooth boundary and $T >0$ be a constant. Let $1< m <+\infty, 1< n <+\infty$ be two fixed numbers. Assume that $g \in L_{m,n}(Q_T)$. Then there exists a unique function pair $(v,p)$, which satisfies the following equations:
$$
\left\{\begin{aligned}
\partial_t v-\Delta v+\nabla p &= g \quad \text{in}\quad Q_T,\\
\nabla \cdot v&=0\quad\text{in}\quad Q_T,\\
[p]_{\Omega}(t)&=0\quad\text{for}\quad\text{a.e.}\,\, t\in [0,T],\\
v&=0 \quad\text{on}\quad \partial_p Q_T.
\end{aligned}
\right.
$$
Moreover, $v$ and $p$ satisfy the following estimate:
\begin{align*}
\|v\|_{W_{m,n}^{2,1}(Q_T)}+\|p\|_{W_{m,n}^{1,0}(Q_T)} \leq C\|g\|_{L_{m,n}(Q_T)}.
\end{align*}
where the constant $C$ only depends on $m,n,T$, and $\Omega$.
\end{lemma}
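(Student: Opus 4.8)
The statement is exactly the maximal $L_{m,n}$ regularity estimate for the non-stationary Stokes system with homogeneous Dirichlet boundary data and zero initial data, so the natural route is to reduce it to an abstract parabolic problem for the Stokes operator and then invoke the operator-sum (Dore--Venni) method. First I would apply the Helmholtz--Leray projection $P$ onto solenoidal fields to the system in the statement. Writing $A=-P\Delta$ for the Stokes operator on the solenoidal subspace $L^m_\sigma(\Omega)$, with domain consisting of the $W^2_m(\Omega)$ vector fields that are divergence free and vanish on $\partial\Omega$, the problem becomes the abstract Cauchy problem $\partial_t v + A v = Pg$ on $(0,T)$ with $v(0)=0$. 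Since $\nabla p$ lies in the range of $I-P$, it is annihilated by the projection, so the pressure is recovered only at the final step.

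The heart of the matter is the maximal $L_n$-in-time regularity of $A$ valued in $L^m_\sigma$. The key analytic input, due to Giga, is that on a bounded domain with smooth boundary the Stokes operator $A$ is sectorial and admits bounded imaginary powers with power angle strictly less than $\pi/2$; equivalently, $A$ is $\mathcal{R}$-sectorial of angle $<\pi/2$. Combined with the fact that the time-derivative operator $d/dt$ on $L_n((0,T))$ with vanishing initial trace has bounded imaginary powers of angle $\pi/2$ and commutes with $A$, the Dore--Venni theorem on sums of commuting sectorial operators (the angle condition $\theta_A+\tfrac{\pi}{2}<\pi$ being satisfied) shows that $\partial_t+A$ is boundedly invertible onto the maximal regularity space, yielding
\[
\|\partial_t v\|_{L_{m,n}(Q_T)}+\|A v\|_{L_{m,n}(Q_T)}\le C\|Pg\|_{L_{m,n}(Q_T)}\le C\|g\|_{L_{m,n}(Q_T)}.
\]
Alternatively one may quote Weis's operator-valued Fourier multiplier theorem, using $\mathcal{R}$-sectoriality directly. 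This is precisely the step that permits the spatial exponent $m$ and the temporal exponent $n$ to be chosen independently: classical scalar parabolic theory delivers only the diagonal case $m=n$, and it is the bounded imaginary powers (equivalently, $\mathcal{R}$-boundedness) of the Stokes operator that breaks this restriction.

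It then remains to convert the bound on $\|Av\|$ into genuine second-derivative and pressure estimates and to settle uniqueness. For almost every fixed time, $v(\cdot,t)$ solves the stationary Stokes system $-\Delta v+\nabla p=g-\partial_t v$, $\nabla\cdot v=0$, $v=0$ on $\partial\Omega$; the Cattabriga--Solonnikov elliptic estimates give
\[
\|\nabla^2 v\|_{L_m(\Omega)}+\|\nabla p\|_{L_m(\Omega)}\le C\big(\|g\|_{L_m(\Omega)}+\|\partial_t v\|_{L_m(\Omega)}\big),
\]
and raising to the $n$-th power and integrating in $t$ yields the $L_{m,n}$ bounds for $\nabla^2 v$ and $\nabla p$. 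The pressure is pinned down uniquely by the normalization $[p]_\Omega(t)=0$, and $p$ itself is controlled in $L_{m,n}$ from $\nabla p$ via the Poincar\'e--Sobolev (Bogovski\u\i) inequality on $\Omega$. The lower-order terms $\|v\|_{L_{m,n}}$ and $\|\nabla v\|_{L_{m,n}}$ follow from the control of $\partial_t v$ together with $v(0)=0$ and the finiteness of $T$. Uniqueness is inherited from uniqueness of the abstract Cauchy problem together with the mean-zero condition on $p$.

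The \emph{main obstacle} is the second step: establishing, or correctly importing, the bounded imaginary powers and $\mathcal{R}$-sectoriality of the Dirichlet Stokes operator with power angle below $\pi/2$. This is the genuinely deep ingredient, resting on uniform resolvent estimates in a sector obtained through a parametrix construction near the boundary, and everything else is comparatively soft. For the purposes of this paper I would simply cite the classical results of Giga and of Giga--Sohr for this estimate rather than reproduce their proof.
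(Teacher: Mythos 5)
Your outline is a correct and essentially complete route to the lemma, but it is not the paper's route: the paper gives no proof at all and simply cites Maremonti--Solonnikov \cite{MaSo94}, whose argument for the mixed-norm estimate is of a different nature (explicit potential-theoretic representations and Solonnikov-school estimates for the evolution Stokes problem), whereas you go through the abstract machinery of the Helmholtz projection, the bounded imaginary powers of the Dirichlet Stokes operator (Giga, Giga--Sohr), and the Dore--Venni operator-sum theorem, followed by the Cattabriga--Solonnikov stationary estimates to recover $\nabla^2 v$ and $\nabla p$ slice by slice in time. Both approaches are legitimate; yours is the one that generalizes most cleanly to other boundary conditions and unbounded domains, while the cited potential-theoretic proof is more self-contained at the level of classical PDE estimates. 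Two small points worth tightening if you were to write this out: the Dore--Venni theorem requires the underlying space to be of class UMD, so you should record that $L^m_\sigma(\Omega)$, being a closed subspace of $L_m(\Omega)$ with $1<m<\infty$, inherits this property; and the remark that ``classical scalar parabolic theory delivers only the diagonal case $m=n$'' overstates the point --- mixed-norm maximal regularity for the heat equation is itself classical --- the genuine difficulty specific to Stokes is the nonlocal pressure coupling, which is exactly what the projection plus BIP argument (or the potential-theoretic alternative) is designed to handle. Neither issue affects the validity of the proposed proof.
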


\begin{lemma}
\label{lest_2}
Let $1< m \leq 2$, $1< n \leq 2$, and $m\leq s <+\infty$ be constants and $g\in L_{s,n}(Q^+)$.
Assume that the functions $v \in W^{1,0}_{m,n}(Q^+)$ and $p\in L_{m,n}(Q^+)$ satisfy the equationsㄩ
$$
\left\{\begin{aligned}
\partial_t v-\Delta v+\nabla p &= g \quad\text{in}\quad Q^{+},\\
\nabla \cdot v&=0\quad\text{in}\quad Q^{+},
\end{aligned}
\right.
$$
and the boundary condition
\begin{equation*}
v=0 \quad\text{on}\quad \{y \big| y=(y',0),|y'|<1\} \times [-1,0).
\end{equation*}
Then, we have $v\in W^{2,1}_{s,n}(Q^+(\frac{1}{2}))$, $p\in W^{1,0}_{s,n}(Q^+(\frac{1}{2}))$, and
\begin{align*}
&\quad\|v\|_{W_{s,n}^{2,1}(Q^{+}(\frac{1}{2}))}+\|p\|_{W_{s,n}^{1,0}(Q^{+}(\frac{1}{2}))}\\& \leq C(\|g\|_{L_{s,n}(Q^{+})}+\|v\|_{W_{m,n}^{1,0}(Q^{+})}+\|p\|_{L_{m,n}(Q^{+})}).
\end{align*}
where the constant $C$ only depends on $m$, $n$, and $s$.
\end{lemma}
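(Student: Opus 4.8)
The plan is to combine the global maximal-regularity estimate of Lemma~\ref{lest_1} with a localization argument and a finite bootstrap on the spatial integrability exponent. Since the statement is local — an estimate on $Q^+(1/2)$ controlled by data on $Q^+$ — while Lemma~\ref{lest_1} is global, the first task is to cut off and reduce to the solenoidal setting of that lemma on a fixed smooth subdomain.

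First I would fix a chain of radii $1/2=r_0<r_1<\cdots<r_N=1$ and, for each step, a smooth cutoff $\zeta_k$ equal to $1$ on $Q^+(r_k)$ and supported in $Q^+(r_{k+1})$, with $\zeta_k$ vanishing near the spherical cap $S^+(r_{k+1})$ and near the bottom time slice, but \emph{not} cut off along the flat boundary $\{x_4=0\}$, where $v=0$ is given. Setting $w=\zeta_k v$ and $q=\zeta_k p$, the pair $(w,q)$ solves
$$\partial_t w-\Delta w+\nabla q=\tilde g,\qquad \nabla\cdot w=\nabla\zeta_k\cdot v=:h,$$
with $\tilde g=\zeta_k g+(\partial_t\zeta_k-\Delta\zeta_k)v-2\nabla\zeta_k\cdot\nabla v+(\nabla\zeta_k)p$, and $w$ vanishes on the whole parabolic boundary of the localized region (on the flat part because $v=0$, elsewhere because $\zeta_k=0$). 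The divergence $h$ is nonzero, so to invoke Lemma~\ref{lest_1} I would either correct $w$ by $V$, where $\nabla\cdot V=h$ is solved by the Bogovskii operator on a fixed smooth domain $\Omega_k$ with $B^+(r_k)\subset\Omega_k\subset B^+(r_{k+1})$ whose boundary contains the flat portion (note $h$ has zero spatial mean, since $\int\nabla\zeta_k\cdot v=-\int\zeta_k\,\nabla\cdot v=0$), or use the divergence-form rewriting $-2\nabla\zeta_k\cdot\nabla v=-2\,\Div(v\otimes\nabla\zeta_k)+2(\Delta\zeta_k)v$ together with the corresponding estimate on $\Omega_k$. Either way, the commutator terms in $\tilde g$ involve only $g$, $v$, $\nabla v$ and $p$, all under control at the current integrability level.

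With this reduction, I would run the bootstrap. At the base level $v,\nabla v,p\in L_{m,n}$ and $g\in L_{s,n}\subset L_{m,n}$, so $\tilde g\in L_{m,n}$, and Lemma~\ref{lest_1} yields $v\in W^{2,1}_{m,n}$, $p\in W^{1,0}_{m,n}$ on a slightly smaller cylinder, with norm bounded by $\|g\|_{L_{s,n}}+\|v\|_{W^{1,0}_{m,n}}+\|p\|_{L_{m,n}}$. The anisotropic parabolic embedding $W^{2,1}_{\mu,n}\hookrightarrow W^{1,0}_{\hat\mu,n}$, with $\tfrac1{\hat\mu}=\tfrac1\mu-\tfrac14$ coming from the spatial Sobolev embedding in $d=4$ while the time exponent $n$ is untouched, then upgrades $v$, $\nabla v$ and $p$ to $L_{\hat\mu,n}$, which improves $\tilde g$ to $L_{\hat\mu,n}$ on the next cylinder. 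Iterating, the reciprocal spatial exponent drops by the fixed amount $\tfrac14$ at each step, so after finitely many steps $N=N(m,s)$ the exponent exceeds $s$ (once $\mu\ge4$ one has $W^{1,\mu}\hookrightarrow L_q$ for all $q$); clamping the last step to $s$ and using $g\in L_{s,n}$ gives $v\in W^{2,1}_{s,n}(Q^+(1/2))$ and $p\in W^{1,0}_{s,n}(Q^+(1/2))$. Summing the finitely many intermediate estimates produces the stated bound with a constant depending only on $m,n,s$.

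I expect the main obstacle to be the boundary bookkeeping rather than the iteration itself: one must apply the Stokes estimate up to the flat boundary while the cutoff forces a nonzero divergence and a pressure-times-gradient commutator, whereas Lemma~\ref{lest_1} is stated on smooth domains with homogeneous data. Choosing the auxiliary smooth domains $\Omega_k$ so that their boundaries contain the flat piece where $v=0$, and controlling the Bogovskii correction $V$ in the mixed norm $L_{m,n}$ — in particular its time regularity, which a priori involves $\partial_t v$ — is the delicate part. It should be cleanest to keep $V$ at the current integrability level and absorb it into $\tilde g$ at the next step, so that the correction never costs more than one bootstrap iteration.
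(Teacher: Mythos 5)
The paper does not prove this lemma at all: it is quoted from the literature, with the proof deferred to \cite{Seregin_03, Seregin_09} (local boundary regularity for the nonstationary Stokes system near a flat boundary). So there is no in-paper argument to compare against; your proposal is an attempt at a self-contained proof, and it has to be judged on its own.

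The strategy (localize, correct the divergence, bootstrap the spatial exponent with $n$ fixed) is the right general shape, and you are right that only the spatial exponent can be improved. But there is a genuine gap exactly at the point you flag and then defer. After cutting off and subtracting the Bogovskii correction $V$ with $\nabla\cdot V=\nabla\zeta_k\cdot v$, the pair $(w-V,q)$ solves a Stokes system whose right-hand side contains $-\partial_t V+\Delta V$. To invoke Lemma~\ref{lest_1} you need this forcing in $L_{\mu,n}$ \emph{at the current step}; it cannot be ``absorbed into $\tilde g$ at the next step,'' because without it you have no equation to which Lemma~\ref{lest_1} applies and hence no next step. Now $\partial_t V=\mathrm{Bog}(\partial_t(\nabla\zeta_k\cdot v))$ involves $\nabla\zeta_k\cdot\partial_t v$, and at the base level $\partial_t v$ is only a distribution (via the equation it lies in a negative-order space, being $\Div(\nabla v)-\nabla p+g$ with $\nabla v,p\in L_{m,n}$). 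Making $\partial_t V\in L_{m,n}$ requires the negative-norm mapping property of the Bogovskii operator, $\mathrm{Bog}:\bH^{-1}_m\to L_m$, applied after substituting the equation for $\partial_t v$; this is the actual content of the localization and is not supplied. The alternative you mention --- rewriting $-2\nabla\zeta_k\cdot\nabla v$ in divergence form --- does not help either, because Lemma~\ref{lest_1} is stated only for forcing in $L_{m,n}$ and yields $W^{2,1}_{m,n}$ regularity; a divergence-form right-hand side would require a first-order ($\cH^1_p$-type) solvability result for the Stokes system that the paper does not provide. Until the $\partial_t V$ term is controlled (or the pressure is handled by a decomposition into a forced part and a spatially harmonic part, as in Seregin's proofs), the bootstrap does not get off the ground.
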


We refer the reader to \cite{MaSo94} for the proof of Lemma \ref{lest_1}, and \cite{Seregin_03, Seregin_09} for the proof of Lemma \ref{lest_2}.

Now we prove the main theorems in three steps.
\subsection{Step 1.}
First, we control the quantities $A_u$, $C_u$, $A_H$, $C_H$ and $D$ in a smaller ball by their values in a larger ball under the assumption that $F_u^{p,q}$ is sufficiently small or \eqref{ehcond1} or \eqref{ehcond2} holds. Here we follow the argument in \cite{Dong_07}, which in turn used some ideas in \cite{Lady_99, Lin_98, Seregin_02}.

%
%
\begin{lemma}
                                    \label{lem2.5}
Suppose $\gamma \in (0,1)$ and $\rho >0$  are constants, and $\omega(z_0,\rho)=Q^{+}(z_0,\rho)$. Then we have
\begin{equation}
\label{C_est}
C_u(\gamma \rho) \leq N[\gamma^{-3}A_u^{1/2}(\rho)E_u(\rho)+\gamma^{-9/2}A_u^{3/4}(\rho)E_u^{3/4}(\rho)+\gamma C_u(\rho)],
\end{equation}
\begin{equation}
\label{C_est2}
C_H(\gamma \rho) \leq N[\gamma^{-3}A_H^{1/2}(\rho)E_H(\rho)+\gamma^{-9/2}A_H^{3/4}(\rho)E_H^{3/4}(\rho)+\gamma C_H(\rho)],
\end{equation}
where $N$ is a constant independent of $\gamma$, $\rho$, and $z_0$.
\end{lemma}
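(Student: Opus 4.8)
The plan is to prove \eqref{C_est} and \eqref{C_est2} by one and the same Caffarelli--Kohn--Nirenberg-type decomposition; since the roles of $u$ and $H$ are symmetric, I describe the argument for $u$ and indicate at the end what changes for $H$. Throughout I write $\langle u\rangle_r(t)=|B^+(x_0,r)|^{-1}\int_{B^+(x_0,r)}u(\cdot,t)\,dx$ for the spatial mean, and I reduce matters to bounding $\int_{B^+(x_0,\gamma\rho)}|u|^3\,dx$ at a fixed time, since $C_u(\gamma\rho)$ is the time integral of this quantity over $(t_0-(\gamma\rho)^2,t_0)$ divided by $(\gamma\rho)^3$. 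The first step is the splitting
\begin{equation*}
\int_{B^+(\gamma\rho)}|u|^3\,dx\le N\int_{B^+(\gamma\rho)}|u-\langle u\rangle_{\gamma\rho}|^3\,dx+N(\gamma\rho)^4|\langle u\rangle_{\gamma\rho}(t)|^3 .
\end{equation*}

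For the oscillation term I would apply the Sobolev--Poincar\'e inequality to the mean-zero function $u-\langle u\rangle_{\gamma\rho}$, in the form $\int|w|^3\le N(\int|\nabla w|^2)(\int|w|^2)^{1/2}$, which is scale invariant in dimension four (no boundary condition is needed; for the velocity the vanishing of $u$ on $\{x_4=0\}$ also makes Lemma~\ref{interpolation} directly applicable). Since subtracting the mean decreases the $L^2$ norm and $B^+(\gamma\rho)\subset B^+(\rho)$, I bound $\int_{B^+(\gamma\rho)}|u-\langle u\rangle_{\gamma\rho}|^2\le\int_{B^+(\rho)}|u|^2\le\rho^2A_u(\rho)$, pull this factor out of the time integral, and estimate the remaining $\int|\nabla u|^2$ against $E_u(\rho)$. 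After dividing by $(\gamma\rho)^3$ this contributes exactly the first term $N\gamma^{-3}A_u^{1/2}(\rho)E_u(\rho)$.

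The heart of the matter is the mean term, which I would treat by telescoping between the two scales, $|\langle u\rangle_{\gamma\rho}|\le|\langle u\rangle_{\gamma\rho}-\langle u\rangle_{\rho}|+|\langle u\rangle_{\rho}|$. For the coarse mean, H\"older gives $|\langle u\rangle_\rho(t)|\le N\rho^{-4/3}(\int_{B^+(\rho)}|u|^3)^{1/3}$; cubing, multiplying by $(\gamma\rho)^4$, integrating over the short time interval and dividing by $(\gamma\rho)^3$ produces $N\gamma^4\cdot\gamma^{-3}C_u(\rho)=N\gamma C_u(\rho)$, the absorbable term. For the cross-scale difference I would estimate $\langle u\rangle_{\gamma\rho}-\langle u\rangle_\rho=|B^+(\gamma\rho)|^{-1}\int_{B^+(\gamma\rho)}(u-\langle u\rangle_\rho)$ in \emph{two} ways: once by the energy, $|\langle u\rangle_{\gamma\rho}-\langle u\rangle_\rho|\le N(\gamma\rho)^{-2}\rho A_u^{1/2}(\rho)$, and once by the gradient through Poincar\'e on $B^+(\rho)$, $|\langle u\rangle_{\gamma\rho}-\langle u\rangle_\rho|\le N(\gamma\rho)^{-2}\rho(\int_{B^+(\rho)}|\nabla u|^2)^{1/2}$; then I write the cube as the product of the $3/2$-power of each of these bounds.

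The step I expect to be the main obstacle is the time integration of this last contribution: after multiplying by $(\gamma\rho)^4$ one is left with $(\gamma\rho)^{-2}\rho^3A_u^{3/4}(\rho)\int_{t_0-(\gamma\rho)^2}^{t_0}(\int_{B^+(\rho)}|\nabla u|^2\,dx)^{3/4}\,dt$, and the decisive point is that the time exponent $3/4$ is strictly less than $1$. Applying H\"older in time with exponents $4/3$ and $4$ turns the inner integral into $(\int_{Q^+(\rho)}|\nabla u|^2)^{3/4}$ times the gain $((\gamma\rho)^2)^{1/4}$ coming from the short interval, and this gain is exactly what fixes the power of $\gamma$. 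Collecting the powers of $\gamma$ and $\rho$ and dividing by $(\gamma\rho)^3$ yields the middle term $N\gamma^{-9/2}A_u^{3/4}(\rho)E_u^{3/4}(\rho)$; adding the three contributions gives \eqref{C_est}. Finally, \eqref{C_est2} follows by running the identical argument with $H,A_H,E_H,C_H$ in place of $u,A_u,E_u,C_u$: no boundary condition on $H$ is required, since every inequality above was applied either to a mean-zero function or through H\"older, all of which are insensitive to the behaviour of $H$ on $\partial\Omega$.
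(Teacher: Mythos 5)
Your proof is correct, and it is essentially the argument the paper relies on: the paper simply cites Lemma 2.8 of Dong--Du \cite{Dong_07} (adapted to half-balls), whose proof is exactly this mean/oscillation decomposition with Sobolev--Poincar\'e for the oscillation, a $\gamma C_u(\rho)$ term from the coarse mean, and the two-sided estimate of $\langle u\rangle_{\gamma\rho}-\langle u\rangle_\rho$ combined with H\"older in time to produce the $\gamma^{-9/2}A_u^{3/4}E_u^{3/4}$ term. Your exponent bookkeeping checks out, and your observation that no boundary condition on $H$ is needed (only mean-zero Poincar\'e inequalities are used) is consistent with the paper's remark that the half-ball case requires only obvious modifications.
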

\begin{proof}
This is Lemma 2.8 of \cite{Dong_07} with the only difference that balls (or cylinders) are replaced by half balls (or half cylinders, respectively). By using the zero boundary condition, the proof remains the same with obvious modifications. We omit the details.
\end{proof}

\begin{lemma}
\label{G_est_lemma}
Suppose $ \gamma \in (0,1/4]$, $\rho >0, 1<\lambda<2, \frac{4}{\kappa}+\frac{2}{\lambda}\geq 4, \kappa > \frac{4}{3}$ are constants, $\kappa'$ is any large number such that $3-\frac{2}{\lambda}-\frac{4}{\kappa'}>0$, and $\omega(z_0,\rho)=Q^{+}(z_0,\rho)$. Then we have
\begin{align}
\nonumber
G^{\kappa,\lambda}(\gamma \rho) \leq &N[\gamma^{2-\frac{4}{\kappa}-\frac{2}{\lambda}}(A_u^{\frac{2-\kappa}{\kappa}}(\rho)E_u^{\frac{2\kappa-2}{\kappa}}(\rho)+A_H^{\frac{2-\kappa}{\kappa}}(\rho)E_H^{\frac{2\kappa-2}{\kappa}}(\rho))
\\&\quad+\gamma^{3-\frac{2}{\lambda}-\frac{4}{\kappa'}}(G^{\kappa,\lambda}(\rho)+A_u^{1/2}(\rho)+E_u^{1/2}(\rho))],
\label{G_est}
\end{align}
where $N$ is a constant independent of $\gamma$, $\rho$, and $z_0$. In particular, for $\kappa=\lambda=\frac{3}{2}, \kappa'=24$, we have
\begin{align}
\nonumber
D(\gamma \rho) \leq &N[\gamma^{-3}(A_u^{1/2}(\rho)E_u(\rho)+A_H^{1/2}(\rho)E_H(\rho))
\\&\quad+\gamma^{9/4}(D(\rho)+A_u^{3/4}(\rho)+E_u^{3/4}(\rho))].
\label{D_est}
\end{align}
\end{lemma}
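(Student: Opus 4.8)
The plan is to control the pressure by splitting it into a part driven by the nonlinearity and a part solving the homogeneous Stokes system, following the decomposition of Seregin \cite{Seregin_02}; the claimed \eqref{D_est} will then be read off from \eqref{G_est} by a one–line algebraic specialization. By the natural scaling it suffices to argue on the larger cylinder $Q^+(z_0,\rho)$. Taking the divergence of the first equation in \eqref{ns} and using $\nabla\cdot u=\nabla\cdot H=0$, the pair $(u,\Pi)$ solves the Stokes system $\partial_t u-\Delta u+\nabla\Pi=\nabla\cdot(H\otimes H-u\otimes u)$, $\nabla\cdot u=0$ with $u=0$ on the flat part of the boundary. I would write $\Pi=\Pi_1+\Pi_2$ on $Q^+(z_0,\rho)$, where $(v,\Pi_1)$ is furnished by Lemma \ref{lest_1} as the solution of the inhomogeneous Stokes system with forcing $g=\nabla\cdot(H\otimes H-u\otimes u)$ and vanishing lateral and initial data (after rescaling $Q^+(\rho)$ to a fixed smooth reference domain), and $(w,\Pi_2)=(u-v,\Pi-\Pi_1)$ then solves the homogeneous system $\partial_t w-\Delta w+\nabla\Pi_2=0$, $\nabla\cdot w=0$, with $w=0$ on the flat boundary.

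For the first piece, Lemma \ref{lest_1} yields $\|\Pi_1\|_{L_{\kappa,\lambda}(Q^+(\rho))}\le C\||u|^2+|H|^2\|_{L_{\kappa,\lambda}(Q^+(\rho))}$. I would then apply the anisotropic interpolation inequality Lemma \ref{interpolation} with $q=2\kappa$ (admissible, since the hypotheses force $\kappa\in(4/3,2)$ and hence $2\kappa\in(8/3,4)$) to bound $\|u\|_{L_{2\kappa}(B^+(\rho))}^2$ by $\big(\int|\nabla u|^2\big)^{(2\kappa-2)/\kappa}\big(\int|u|^2\big)^{(2-\kappa)/\kappa}$ plus a lower-order term, and likewise for $H$. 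Integrating in time, using Hölder's inequality in $t$ when $4/\kappa+2/\lambda>4$ (the borderline equality corresponding exactly to $\lambda=\kappa/(2\kappa-2)$, where the power of $E_u$ is unit), and rewriting through $A_u,E_u,A_H,E_H$, I obtain the bound in scale-invariant form. Since $G^{\kappa,\lambda}(\gamma\rho)$ restricts the integral to the smaller cylinder, estimating $\|\Pi_1-[\Pi_1]_{x_0,\gamma\rho}\|_{L_{\kappa,\lambda}(Q^+(\gamma\rho))}\le\|\Pi_1\|_{L_{\kappa,\lambda}(Q^+(\rho))}$ and dividing by the normalizing factor $(\gamma\rho)^{4/\kappa+2/\lambda-2}$ produces precisely the prefactor $\gamma^{2-4/\kappa-2/\lambda}$ multiplying $A_u^{(2-\kappa)/\kappa}E_u^{(2\kappa-2)/\kappa}+A_H^{(2-\kappa)/\kappa}E_H^{(2\kappa-2)/\kappa}$.

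For the homogeneous piece $\Pi_2$ I would exploit the extra regularity of solutions of the homogeneous Stokes system up to the flat boundary. Applying the local boundary regularity Lemma \ref{lest_2} to $(w,\Pi_2)$ upgrades the integrability to the large exponent $\kappa'$ and controls $\nabla\Pi_2$ on $Q^+(\rho/2)$; a Poincar\'e-type oscillation estimate then shows that $\|\Pi_2-[\Pi_2]_{x_0,\gamma\rho}\|_{L_{\kappa,\lambda}(Q^+(\gamma\rho))}$ carries the genuinely decaying factor $\gamma^{3-2/\lambda-4/\kappa'}$, positive by the hypothesis $3-2/\lambda-4/\kappa'>0$. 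The right-hand side of Lemma \ref{lest_2} involves $\|\Pi_2\|_{L_{m,n}(Q^+(\rho))}+\|w\|_{W^{1,0}_{m,n}(Q^+(\rho))}$; inserting $\Pi_2=\Pi-\Pi_1$ and $w=u-v$, estimating $\|\Pi\|$ by $G^{\kappa,\lambda}(\rho)$ and the velocity norms (with $m=n=2$, up to scaling factors) by $\|u\|_{L_{2,2}}\!\sim\!A_u^{1/2}(\rho)$ and $\|\nabla u\|_{L_{2,2}}\!\sim\!E_u^{1/2}(\rho)$, the quadratic $\Pi_1,v$ contributions being absorbable into the first group, gives the second term $\gamma^{3-2/\lambda-4/\kappa'}\big(G^{\kappa,\lambda}(\rho)+A_u^{1/2}(\rho)+E_u^{1/2}(\rho)\big)$. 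Summing the two contributions yields \eqref{G_est}.

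Finally, \eqref{D_est} is the specialization $\kappa=\lambda=3/2$, $\kappa'=24$: one checks $4/\kappa+2/\lambda-2=2$, whence $D(r)=\big(G^{3/2,3/2}(r)\big)^{3/2}$, while the two exponents become $2-4/\kappa-2/\lambda=-2$ and $3-2/\lambda-4/\kappa'=3/2$. Raising \eqref{G_est} to the power $3/2$ and using $(a+b)^{3/2}\le C(a^{3/2}+b^{3/2})$ turns these into $\gamma^{-3}$ and $\gamma^{9/4}$ and converts $A_u^{1/3}E_u^{2/3}\mapsto A_u^{1/2}E_u$ and $G^{3/2,3/2}\mapsto D$, giving exactly \eqref{D_est}. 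I expect the genuine obstacle to be the homogeneous-part estimate: unlike the interior case the decay up to the boundary is only the ``slow'' $\gamma^{3-2/\lambda-4/\kappa'}$, so one must carefully track the velocity contributions $A_u^{1/2},E_u^{1/2}$ (which are merely half-powers of the energy and cannot be absorbed into the quadratic group), and one must verify that the two auxiliary Stokes problems are well posed on the half-cylinder with matching exponents so that Lemmas \ref{lest_1} and \ref{lest_2} apply and their norms align across the decomposition.
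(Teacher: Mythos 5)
Your proposal follows essentially the same route as the paper: the Seregin-type decomposition $\Pi=\Pi_1+\Pi_2$ with Lemma \ref{lest_1} for the forced piece, Lemma \ref{lest_2} for the homogeneous piece producing the slow boundary factor $\gamma^{3-\frac{2}{\lambda}-\frac{4}{\kappa'}}$ together with the $G^{\kappa,\lambda}(\rho)+A_u^{1/2}(\rho)+E_u^{1/2}(\rho)$ group, and the algebraic specialization $D=(G^{3/2,3/2})^{3/2}$ for \eqref{D_est}, all of which check out. The only point to adjust is that Lemma \ref{lest_1} as stated does not directly give the divergence-form bound $\|\Pi_1\|_{L_{\kappa,\lambda}}\le C\||u|^2+|H|^2\|_{L_{\kappa,\lambda}}$; the paper instead applies it with the non-divergence forcing $\tilde f=-u\cdot\nabla u+H\cdot\nabla H\in L_{\frac{4\kappa}{\kappa+4},\lambda}$ to control $\nabla \Pi_1$, and then recovers the $L_{\kappa,\lambda}$ oscillation bound on $B^{+}(\gamma)$ via the Sobolev--Poincar\'e inequality (noting that the Sobolev conjugate of $\tfrac{4\kappa}{\kappa+4}$ in dimension four is exactly $\kappa$), which yields the same prefactor $\gamma^{2-\frac{4}{\kappa}-\frac{2}{\lambda}}$.
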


\begin{proof}
Without loss of generality, by shifting the coordinates we may assume that $z_0=(0,0)$. By the scale-invariant property, we may also assume $\rho=1$.
We choose and fix a domain $\tilde{B} \subset \mathbb{R}^{4}$ with smooth boundary so that
\begin{equation*}
B^{+}(1/2) \subset \tilde{B} \subset B^{+},
\end{equation*}
and denote $\tilde{Q}=\tilde{B}\times(-1,0)$.
Define $\tilde{f}=- u\cdot\nabla u+H\cdot \nabla H$. For $1<\lambda<2, \frac{4}{\kappa}+\frac{2}{\lambda}\geq 4, \kappa>\frac 4 3$, then $2< \frac{4\kappa}{4-\kappa}< 4$. By using H\"{o}lder's inequality, Lemma \ref{interpolation} and the Poincar\'e inequality, we get

\begin{align}
    \label{eq5.37}
\nonumber&\quad(\int_{B^{+}}|\tilde f|^{\frac{4\kappa}{\kappa+4}}\,dx)^{\frac{\kappa+4}{4\kappa}\lambda}\\\nonumber &\leq(\int_{B^{+}}|\nabla u|^2\,dx)^{\frac{\lambda}{2}}(\int_{B^{+}}|u|^{\frac{4\kappa}{4-\kappa}}\,dx)^{\frac{4-\kappa}{4\kappa}\lambda}\\\nonumber&\quad+(\int_{B^{+}}|\nabla H|^2\,dx)^{\frac{\lambda}{2}}(\int_{B^{+}}|H|^{\frac{4\kappa}{4-\kappa}}\,dx)^{\frac{4-\kappa}{4\kappa}\lambda}\\ \nonumber &\leq (\int_{B^{+}}|\nabla u|^2\,dx)^{\frac{\lambda}{2}}(\int_{B^{+}}|\nabla u|^{2}\,dx)^{\frac{3\kappa-4}{2\kappa}\lambda}(\int_{B^{+}}|u|^{2}\,dx)^{\frac{2-\kappa}{\kappa}\lambda}\\ \nonumber &\quad +(\int_{B^{+}}|\nabla H|^2\,dx)^{\frac{\lambda}{2}}(\int_{B^{+}}|\nabla H|^{2}\,dx)^{\frac{3\kappa-4}{2\kappa}\lambda}(\int_{B^{+}}|H|^{2}\,dx)^{\frac{2-\kappa}{\kappa}\lambda}\\\nonumber&\leq (\int_{B^{+}}|\nabla u|^2\,dx)^{\frac{2\kappa-2}{\kappa}\lambda}(\int_{B^{+}}|u|^2\,dx)^{\frac{2-\kappa}{\kappa}\lambda}\\&\quad+(\int_{B^{+}}|\nabla H|^2\,dx)^{\frac{2\kappa-2}{\kappa}\lambda}(\int_{B^{+}}|H|^2\,dx)^{\frac{2-\kappa}{\kappa}\lambda}
\end{align}
Since we also have $\frac{4\kappa}{4+\kappa} >1$, then by Lemma \ref{lest_1}, there is a unique solution $v\in W^{2,1}_{\frac{4\kappa}{\kappa+4},\lambda}(\tilde{Q})$ and $p_1\in W^{1,0}_{\frac{4\kappa}{\kappa+4},\lambda}(\tilde{Q})$ to the following initial boundary value problem:
$$
\left\{\begin{aligned}
\partial_t v-\Delta v+\nabla p_1 &= \tilde{f} \quad\text{in}\quad \tilde{Q},\\
\nabla \cdot v&=0\quad\text{in}\quad \tilde{Q},\\
[p_1]_{\tilde{B}}(t)&=0\quad \text{for}\quad \text{a.e.}\,\, t\in(-1,0),\\
v&=0 \quad\text{on}\quad \partial_p  \tilde{Q}.
\end{aligned}
\right.
$$
Moreover, we have
\begin{align}
\label{v1p1}
\nonumber &\quad\|v\|_{L_{\frac{4\kappa}{\kappa+4},\lambda}(\tilde{Q})}+\|\nabla v\|_{L_{\frac{4\kappa}{\kappa+4},\lambda}(\tilde{Q})}+\|p_1\|_{L_{\frac{4\kappa}{\kappa+4},\lambda}(\tilde{Q})}+\|\nabla p_1\|_{L_{\frac{4\kappa}{\kappa+4},\lambda}(\tilde{Q})}\\ \nonumber
&\leq N\|\tilde{f}\|_{L_{\frac{4\kappa}{\kappa+4},\lambda}(\tilde{Q})}\\ \nonumber
&\le N\big(\int_{-1}^{0}
(\int_{B^{+}}|\nabla u|^2\,dx)^{\frac{2\kappa-2}{\kappa}\lambda}(\int_{B^{+}}|u|^2\,dx)^{\frac{2-\kappa}{\kappa}\lambda}\,dt
\big)^{\frac{1}{\lambda}}\\&\quad +N\big(\int_{-1}^{0}
(\int_{B^{+}}|\nabla H|^2\,dx)^{\frac{2\kappa-2}{\kappa}\lambda}(\int_{B^{+}}|H|^2\,dx)^{\frac{2-\kappa}{\kappa}\lambda}\,dt
\big)^{\frac{1}{\lambda}},
\end{align}
where in the last inequality we used \eqref{eq5.37}.

We set $w=u-v$ and $p_2=\Pi-p_1-[\Pi]_{0,1/2}$. Then $w$ and $p_2$ satisfy
$$
\left\{\begin{aligned}
\partial_t w-\Delta w +\nabla p_2 &=0\quad\text{in}\quad \tilde{Q},\\
\nabla \cdot w&=0\quad\text{in}\quad \tilde{Q},\\
w&=0 \quad\text{on}\quad \big\{\partial \tilde{B} \cap \partial\Omega\big\}\times[-1,0).
\end{aligned}
\right.
$$
By Lemma \ref{lest_2} together with a scaling and the triangle inequality, we have $p_2\in W^{1,0}_{\kappa',\lambda}(Q^{+}(1/4))$ and
\begin{align}
\nonumber&\quad \|\nabla p_2\|_{L_{\kappa',\lambda}(Q^{+}(1/4))}\\
\nonumber &\leq N\Big[\|w\|_{L_{\frac{4\kappa}{\kappa+4},\lambda}(Q^{+}(1/2))}+\|\nabla w\|_{L_{\frac{4\kappa}{\kappa+4},\lambda}(Q^{+}(1/2))}+
\|p_2\|_{L_{\frac{4\kappa}{\kappa+4},\lambda}(Q^{+}(1/2))}\Big]\\
\nonumber&\leq N\Big[\|u\|_{L_{\frac{4\kappa}{\kappa+4},\lambda}(Q^{+}(1/2))}+\|\nabla u\|_{L_{\frac{4\kappa}{\kappa+4},\lambda}(Q^{+}(1/2))}\\
\nonumber&\quad+\|\Pi-[\Pi]_{0,1/2}\|_{L_{\frac{4\kappa}{\kappa+4},\lambda}(Q^{+}(1/2))}
+\|v\|_{L_{\frac{4\kappa}{\kappa+4},\lambda}(Q^{+}(1/2))}
\\&\quad+\|\nabla v\|_{L_{\frac{4\kappa}{\kappa+4},\lambda}(Q^{+}(1/2))}+
\|p_1\|_{L_{\frac{4\kappa}{\kappa+4},\lambda}(Q^{+}(1/2))}\Big].
\label{p2}
\end{align}
Here the constant $\kappa'$ is any sufficient large number. Then with \eqref{v1p1} and H\"older's inequality, we can obtain
\begin{align}
                            \label{eq5.43}
&\quad\nonumber \|\nabla p_2\|_{{L_{\kappa',\lambda}(Q^{+}(1/4))}}\\
\nonumber&\leq  N\Big[\|u\|_{L_{\frac{4\kappa}{\kappa+4},\lambda}(Q^{+}(\frac 1 2))}+\|\nabla u\|_{L_{\frac{4\kappa}{\kappa+4},\lambda}(Q^{+}(\frac 1 2))}
+\|\Pi-[\Pi]_{0,1/2}\|_{L_{\frac{4\kappa}{\kappa+4},\lambda}(Q^{+}(\frac 1 2))}\\ \nonumber
&\quad+N\big(\int_{-1}^{0}
(\int_{B^{+}}|\nabla u|^2\,dx)^{\frac{2\kappa-2}{\kappa}\lambda}(\int_{B^{+}}|u|^2\,dx)^{\frac{2-\kappa}{\kappa}\lambda}\,dt
\big)^{\frac{1}{\lambda}}\\&\quad +N\big(\int_{-1}^{0}
(\int_{B^{+}}|\nabla H|^2\,dx)^{\frac{2\kappa-2}{\kappa}\lambda}(\int_{B^{+}}|H|^2\,dx)^{\frac{2-\kappa}{\kappa}\lambda}\,dt
\big)^{\frac{1}{\lambda}}\Big].
\end{align}
Recall that $0 < \gamma \leq 1 /4$. Then by using the Sobolev--Poincar\'e, the triangle inequality, \eqref{v1p1}, \eqref{eq5.43}, 
and H\"{o}lder's inequality, we bound $D(\gamma)$ by
\begin{align*}
&\quad \dfrac{N}{\gamma^{2-\frac{4}{\kappa}-\frac{2}{\lambda}}}\big(\int_{-\gamma^2}^{0}(\int_{B^{+}(\gamma)} |\nabla p_1|^{\frac{4\kappa}{\kappa+4}}\,dx)^{\frac {\kappa+4}{4\kappa}\lambda}+(\int_{B^{+}(\gamma)} |\nabla p_2|^{\frac{4\kappa}{\kappa+4}}\,dx)^{\frac {\kappa+4}{4\kappa}\lambda}\,dt\big)^{\frac{1}{\lambda}}\\ \nonumber&\leq N\big[\gamma^{2-\frac{4}{\kappa}-\frac{2}{\lambda}}E_u^{\frac{2\kappa-2}{\kappa}}(1)A_u^{\frac{2-\kappa}{\kappa}}(1)+\gamma^{2-\frac{4}{\kappa}-\frac{2}{\lambda}}E_H^{\frac{2\kappa-2}{2\kappa}}(1)A_H^{\frac{2-\kappa}{\kappa}}(1) \big]
\\&\quad+ N\gamma^{3-\frac{2}{\lambda}-\frac{4}{\kappa'} }\big(\int_{-\gamma^2}^{0}(\int_{B^{+}(\gamma)} |\nabla p_2|^{\kappa'}\,dx)^{\frac {\lambda} {\kappa'}}\,dt\big)^{\frac{1}{\lambda}}\\\nonumber&\leq N\big[\gamma^{2-\frac{4}{\kappa}-\frac{2}{\lambda}}E_u^{\frac{2\kappa-2}{\kappa}}(1)A_u^{\frac{2-\kappa}{\kappa}}(1)+\gamma^{2-\frac{4}{\kappa}-\frac{2}{\lambda}}E_H^{\frac{2\kappa-2}{2\kappa}}(1)A_H^{\frac{2-\kappa}{\kappa}}(1) \big]
\\&\quad+ N\gamma^{3-\frac{2}{\lambda}-\frac{4}{\kappa'}}[E_u^{\frac{2\kappa-2}{2\kappa}}(1)A_u^{\frac{2-\kappa}{\kappa}}(1)+E_H^{\frac{2\kappa-2}{2\kappa}}(1)A_H^{\frac{2-\kappa}{\kappa}}(1)+G_u^{\kappa,\lambda}(1)+A^{\frac 1 2}(1)+E^{\frac 1 2}(1)]
\\\nonumber&\leq N\big[\gamma^{2-\frac{4}{\kappa}-\frac{2}{\lambda}}[E_u^{\frac{2\kappa-2}{2\kappa}}(1)A_u^{\frac{2-\kappa}{\kappa}}(1)+E_H^{\frac{2\kappa-2}{2\kappa}}(1)A_H^{\frac{2-\kappa}{\kappa}}(1)]+\gamma^{3-\frac{2}{\lambda}-\frac{4}{\kappa'}}G_u^{\kappa',\lambda}(1)\\
&\quad+\gamma^{3-\frac{2}{\lambda}-\frac{4}{\kappa'}}[A^{\frac 1 2}(1)+E^{\frac 1 2}(1)]
\big].
\end{align*}
The lemma is proved.
\end{proof}

\begin{lemma}
                \label{aelemma}
Suppose $\gamma \in (0,1/2]$ and $\rho >0$ are constants, and $\omega(z_0,\rho)=Q^{+}(z_0,\rho)$. Then we have
\begin{align*}
A_u(\gamma &\rho)+E_u(\gamma \rho)+A_H(\gamma \rho)+E_H(\gamma \rho)\\&\leq N \gamma^{-2}\big[C_u^{2/3}(\rho)+C_H^{2/3}(\rho)+C_u(\rho)+C_u^{1/3}(\rho)D^{2/3}(\rho)+C_u^{1/3}(\rho)C_H^{2/3}(\rho)\big].
\end{align*}
In particular, when $\gamma=1/2$ we have
\begin{align}
\nonumber A_u(&\rho/2)+E_u(\rho/2)+A_H(\rho/2)+E_H(\rho/2)\\& \leq N \big[C_u^{2/3}(\rho)+C_H^{2/3}(\rho)+C_u(\rho)+C_u^{1/3}(\rho)D^{2/3}(\rho)+C_u^{1/3}(\rho)C_H^{2/3}(\rho)\big].
\label{AE_est_2}
\end{align}
\end{lemma}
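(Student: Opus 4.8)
The plan is to read the estimate directly off the generalized local energy inequality \eqref{energy}, and to produce the factor $\gamma^{-2}$ only at the very end from a trivial monotonicity-and-scaling comparison rather than by extracting it from the size of a cutoff's derivatives. By the scale invariance of $A_u,E_u,C_u,D$ and of their magnetic analogues I first reduce to $z_0=0$, $\rho=1$, and I establish the special case $\gamma=1/2$, namely \eqref{AE_est_2}; the case of a general $\gamma\in(0,1/2]$ will then be immediate.

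For \eqref{AE_est_2} I would test \eqref{energy} with $\psi_1=\psi_2=\psi$, where $\psi\in C^\infty$ is a fixed nonnegative cutoff with $\psi\equiv 1$ on $Q^{+}(1/2)$, $\operatorname{supp}\psi\subset Q^{+}(1)$, and $|\nabla\psi|+|\partial_t\psi|+|\nabla^2\psi|\le N$. Since $\psi\equiv 1$ on $Q^{+}(1/2)$, the left-hand side of \eqref{energy} is bounded below by $c\,[A_u(1/2)+E_u(1/2)+A_H(1/2)+E_H(1/2)]$, so the task is to bound each term on the right by the quantities in \eqref{AE_est_2}. The purely quadratic terms $\int_{Q_t}|u|^2(\partial_t\psi+\Delta\psi)$ and $\int_{Q_t}|H|^2(\partial_t\psi+\Delta\psi)$ are controlled, after H\"older's inequality on $\omega(1)$, by $N C_u^{2/3}(1)$ and $N C_H^{2/3}(1)$; the cubic term $\int_{Q_t}|u|^2 u\cdot\nabla\psi$ gives $N C_u(1)$; and the coupled terms without derivatives, namely $\int_{Q_t}|H|^2 u\cdot\nabla\psi$ and $\int_{Q_t}(H\cdot u)(H\cdot\nabla\psi)$, are each bounded by H\"older by $N C_u^{1/3}(1)C_H^{2/3}(1)$. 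For the pressure term I would first use $\Div u=0$ to replace $\Pi$ by $\Pi-[\Pi]_{x_0,1}$ (the spatial mean drops out against $u\cdot\nabla\psi$), and then H\"older with exponents $3/2$ and $3$ yields $N C_u^{1/3}(1)D^{2/3}(1)$.

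The one delicate point, and the step I expect to be the main obstacle, is the pair of terms carrying a derivative of the unknowns, namely $-2\int_{Q_t}H\cdot\nabla u\cdot H\,\psi$ and $-2\int_{Q_t}(H\cdot\nabla H)\cdot u\,\psi$. A naive bound on the first forces $\int|H|^2|\nabla u|$ and hence a quantity such as $\int|H|^4$, which is not available on the right-hand side of \eqref{AE_est_2} and for which the boundary interpolation of Lemma \ref{interpolation} is awkward, since $H$ does not satisfy a Dirichlet condition. Instead I would integrate the first term by parts in $x$; using $\Div H=0$, the vanishing of $u$ on the flat part of $\partial Q^{+}$ coming from \eqref{bd}, and the compact support of $\psi$ away from the curved boundary, one obtains the identity
\begin{equation*}
-2\int_{Q_t}H\cdot\nabla u\cdot H\,\psi\,dz=2\int_{Q_t}(H\cdot\nabla H)\cdot u\,\psi\,dz+2\int_{Q_t}(H\cdot u)(H\cdot\nabla\psi)\,dz .
\end{equation*}
The first term on the right cancels $-2\int_{Q_t}(H\cdot\nabla H)\cdot u\,\psi$ exactly (this is precisely why I take $\psi_1=\psi_2$), and the surviving term is once more of $\int|H|^2|u|$ type, hence $\le N C_u^{1/3}(1)C_H^{2/3}(1)$. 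Summing all the preceding bounds establishes \eqref{AE_est_2}.

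Finally, for a general $\gamma\in(0,1/2]$ I would not repeat the energy estimate but simply observe that, since $\gamma\rho\le\rho/2$, the inclusions $B^{+}(x_0,\gamma\rho)\subset B^{+}(x_0,\rho/2)$ and $(t_0-(\gamma\rho)^2,t_0)\subset(t_0-(\rho/2)^2,t_0)$ give $A_u(\gamma\rho)\le(4\gamma^2)^{-1}A_u(\rho/2)$ and $E_u(\gamma\rho)\le(4\gamma^2)^{-1}E_u(\rho/2)$, and likewise for $H$. Combining these with \eqref{AE_est_2} produces the factor $\gamma^{-2}$ and finishes the proof. The only genuine work is thus the energy estimate at $\gamma=1/2$, whose crux is the cancellation above; the regularity needed to justify the integration by parts for a suitable weak solution is standard and handled by an approximation argument.
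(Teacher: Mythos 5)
Your proof is correct and follows essentially the same route as the paper's: test \eqref{energy} with $\psi_1=\psi_2$ equal to a cutoff, use $\nabla\cdot u=0$ to replace $\Pi$ by $\Pi-[\Pi]_{x_0,\rho}$, and estimate every remaining term by H\"older; the integration-by-parts cancellation of the two $H$-gradient terms that you single out as the crux is exactly what the paper uses implicitly when those terms disappear from its displayed bound. The only cosmetic difference is that you produce the factor $\gamma^{-2}$ by monotonicity from the $\gamma=1/2$ case, whereas the paper takes the cutoff equal to $1$ on $Q(z_0,\gamma)$ and reads $\gamma^{-2}$ off the normalization in $A_u(\gamma)$ and $E_u(\gamma)$ --- the two are equivalent.
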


\begin{proof}
As before, we assume $\rho=1$. In the energy inequality \eqref{energy}, we set $t=t_0$ and choose a suitable smooth cut-off function $\psi$ such that
\begin{align*}
\psi \equiv 0 \ \ \text{in} \ \ Q_{t_0}\setminus Q(z_0,1),\ \  0\leq \psi \leq 1 \ \ \text{in} \ \ Q_T,\\
\psi \equiv 1 \ \ \text{in} \ \ Q(z_0,\gamma),  \ \ |\partial_t \psi|+|\nabla \psi| +|\nabla^2 \psi| \leq N\ \ \text{in} \ \ Q_{t_0}.
\end{align*}
By using \eqref{energy} with $\psi_1=\psi_2=\psi$, and because $u$ is divergence free, we get
\begin{multline*}
A_u(\gamma)+2E_u(\gamma)+A_H(\gamma)+2E_H(\gamma)\\ \leq \dfrac{N}{\gamma^2}\Big[\int_{Q^{+}(z_0,1)}|u|^2+|H|^2\,dz +\int_{Q^{+}(z_0,1)}(|u|^2+|H|^2+|\Pi-[\Pi]_{x_0,1}|)|u|\,dz\Big].
\end{multline*}
Using H\"{o}lder's inequality and Young's inequality, one can obtain
\begin{equation*}
\int_{Q^{+}(z_0,1)}|u|^2\, dz \leq\big(\int_{Q^{+}(z_0,1)}|u|^3\, dz\big)^{2/3}\big(\int_{Q^{+}(z_0,1)}\,dz\big)^{1/3} \leq NC_u^{2/3}(1),
\end{equation*}
\begin{equation*}
\int_{Q^{+}(z_0,1)}|H|^2\, dz \leq\big(\int_{Q^{+}(z_0,1)}|H|^3\, dz\big)^{2/3}\big(\int_{Q^{+}(z_0,1)}\,dz\big)^{1/3} \leq NC_H^{2/3}(1),
\end{equation*}

\begin{align*}
&\quad\int_{Q^{+}(z_0,1)}|H^2||u| \,dz\\
&\leq \big(\int_{Q^{+}(z_0,1)}|H|^{3}\,dz\big)^{2/3}
\big(\int_{Q^{+}(z_0,1)}|u|^3\,dz\big)^{1/3}\\
&= C_H^{2/3}(1)C_u^{1/3}(1),
\end{align*}
\begin{align*}
&\quad\int_{Q^{+}(z_0,1)}|\Pi-[\Pi]_{x_0,1}||u| \,dz\\
&\leq \big(\int_{Q^{+}(z_0,1)}|\Pi-[\Pi]_{x_0,1}|^{3/2}\,dz\big)^{2/3}
\big(\int_{Q^{+}(z_0,1)}|u|^3\,dz\big)^{1/3}\\&= C_u^{1/3}(1)D^{2/3}(1),
\end{align*}

Then the conclusion follows immediately.
\end{proof}

\begin{lemma}
\label{eeeeee}
Suppose $\rho >0$ and $\gamma \in (0,1/8]$ are constants, $\kappa,\lambda,p$ and $q$ satisfy that $\frac{1}{\kappa}+\frac{1}{p}=1,\,\frac{1}{\lambda}+\frac{1}{q}=1$, $1<\lambda<2,\, \frac{4}{p}+\frac{2}{q}\leq 2$,  and $\omega(z_0,\rho)=Q^{+}(z_0,\rho)$. Then we have
\begin{align}
\label{ereeeeeeeeeeee}
\nonumber A_u(\gamma \rho)+E_u(\gamma \rho)+A_H&(\gamma\rho)+E_H(\gamma\rho)\leq N \gamma^2(A_u(\rho)+A_H(\rho)) \\ &+N\gamma^{-3}F_u^{p,q}(\rho)\big[[F_u^{2\kappa,2\lambda}(\rho)]^2+G^{\kappa,\lambda}(\rho)+[F_H^{2\kappa,2\lambda}(\rho)]^2\big],
\end{align}
where $N$ is a constant independent of $\rho$, $\gamma$, and $z_0$.
In particular, for $p=3,q=3$, we have
\begin{multline}
\label{ers}
A_u(\gamma \rho)+E_u(\gamma \rho)+A_H(\gamma\rho)+E_H(\gamma\rho)\leq N \gamma^2(A_u(\rho)+A_H(\rho)) \\ +N\gamma^{-3}\big[C_u(\rho)+C_u^{1/3}(\rho)D^{2/3}(\rho)+C_u^{1/3}(\rho)C_H^{2/3}(\rho)\big].
\end{multline}
\end{lemma}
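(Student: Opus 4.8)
The plan is to run the localized energy method on the half-cylinder, producing the nonlinear terms by H\"older's inequality and the leading term from the smoothing of the linear part of the flow. After translating $z_0$ to the origin and invoking the scaling invariance of every quantity in the statement, I may assume $\rho=1$. I would then apply the generalized local energy inequality \eqref{energy} with $\psi_1=\psi_2=\psi$, where $\psi$ is a smooth cut-off equal to $1$ on $Q^{+}(\gamma)$ and supported in $Q^{+}(2\gamma)$ (legitimate since $2\gamma\le 1/4$), so that $|\nabla\psi|\le N\gamma^{-1}$ and $|\partial_t\psi+\Delta\psi|\le N\gamma^{-2}$, both supported in the annulus $Q^{+}(2\gamma)\setminus Q^{+}(\gamma)$. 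The first simplification is to treat the magnetic stretching terms: integrating by parts and using $\nabla\cdot H=0$, they combine into a localized term,
\begin{equation*}
-2\int_{Q^{+}}H\cdot\nabla u\cdot H\,\psi-2\int_{Q^{+}}(H\cdot\nabla H)\cdot u\,\psi=2\int_{Q^{+}}(H\cdot u)(H\cdot\nabla\psi),
\end{equation*}
so that after cancellation every nonlinear contribution is supported where $\nabla\psi\neq 0$. Since $\psi=1$ on $Q^{+}(\gamma)$, the left-hand side of \eqref{energy} bounds $\gamma^{2}\big(A_u(\gamma)+2E_u(\gamma)+A_H(\gamma)+2E_H(\gamma)\big)$ from below, so every term will finally be divided by $\gamma^{2}$.

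For the genuinely nonlinear terms I would use H\"older's inequality in space-time with the conjugate pairs $(p,\kappa)$ and $(q,\lambda)$. The cubic velocity term, the magnetic term, and the pressure term are respectively of the form $\int(|u|^2+|H|^2)\,|u|\,|\nabla\psi|$ and $\int|\Pi-[\Pi]_{x_0,1}|\,|u|\,|\nabla\psi|$, where the mean $[\Pi]_{x_0,1}$ may be inserted freely because $\int u\cdot\nabla\psi\,dx=0$ (by $\nabla\cdot u=0$ and $u=0$ on the boundary). I place one factor of $u$ in $L_{p,q}$, the quadratic factors $|u|^2,|H|^2$ in $L_{\kappa,\lambda}$ (equivalently $u,H$ in $L_{2\kappa,2\lambda}$), and $\Pi-[\Pi]_{x_0,1}$ in $L_{\kappa,\lambda}$; the identities $\frac1p+\frac1\kappa=1$, $\frac1q+\frac1\lambda=1$ make these splittings exact. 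Bounding the integrals over $Q^{+}(2\gamma)$ by those over $Q^{+}(1)$ and noting that at $\rho=1$ the normalizing prefactors are trivial, this yields $N\gamma^{-1}F_u^{p,q}(1)\big([F_u^{2\kappa,2\lambda}(1)]^2+G^{\kappa,\lambda}(1)+[F_H^{2\kappa,2\lambda}(1)]^2\big)$, and dividing by $\gamma^2$ produces exactly the $\gamma^{-3}$ factor in \eqref{ereeeeeeeeeeee}.

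The delicate point is the leading term $N\gamma^{2}(A_u(1)+A_H(1))$, which must come from the quadratic terms $\int(|u|^2+|H|^2)(\partial_t\psi+\Delta\psi)$. A direct estimate of these — bounding $|u|^2$ by its supremum, or using the Poincar\'e inequality allowed by $u=0$ on the flat boundary together with Lemma \ref{interpolation} — only yields the scaling-neutral factor $\gamma^{-2}$, which is useless for the iteration; indeed, no elementary inequality can give a favorable power, since $\int_{Q^{+}(2\gamma)}|u|^2\le N\gamma^6 A_u(1)$ would be false for a merely finite-energy $u$. To extract the decay factor $\gamma^{2}$ I would instead exploit the smoothing of the linear evolution, following the decomposition already used in Lemma \ref{G_est_lemma}: write $u=v+w$, with $(v,p_1)$ solving the forced Stokes system with right-hand side $\tilde f=-u\cdot\nabla u+H\cdot\nabla H$ and $(w,p_2)$ solving the homogeneous Stokes system, and decompose $H$ analogously through its convection-diffusion equation. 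The forced parts are absorbed into the nonlinear terms above via Lemma \ref{lest_1} and the bound \eqref{eq5.37}, while the homogeneous parts are smooth and their scale-invariant energies satisfy the Campanato-type decay $A_w(\gamma)+E_w(\gamma)\le N\gamma^{2}(A_w(1)+E_w(1))$ coming from interior/boundary regularity for the linear system; controlling $A_w(1)+E_w(1)$ by $A_u(1)+A_H(1)$ up to absorbable forced-part contributions then gives the $\gamma^2$ factor. This linear-comparison step is the main obstacle, being the only place where honest decay — rather than scaling bookkeeping — is produced.

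Finally, assembling the three groups of terms and dividing by $\gamma^2$ yields \eqref{ereeeeeeeeeeee}. The special case follows by taking $p=q=3$, hence $\kappa=\lambda=3/2$ and $2\kappa=2\lambda=3$, and using $F_u^{3,3}=C_u^{1/3}$, $[F_u^{3,3}]^2=C_u^{2/3}$, $G^{3/2,3/2}=D^{2/3}$ and $[F_H^{3,3}]^2=C_H^{2/3}$, which turn the bracket into $C_u(1)+C_u^{1/3}(1)D^{2/3}(1)+C_u^{1/3}(1)C_H^{2/3}(1)$, establishing \eqref{ers}.
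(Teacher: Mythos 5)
Your reduction to $\rho=1$, the cancellation of the two magnetic stretching terms into $2\int(H\cdot u)(H\cdot\nabla\psi)$, and the H\"older splitting of the cubic, magnetic and pressure terms with the conjugate pairs $(p,\kappa)$, $(q,\lambda)$ all match the paper's proof, and you have correctly located the crux: with a plain cut-off the term $\int(|u|^2+|H|^2)(\partial_t\psi+\Delta\psi)$ only returns $\gamma^{-2}(A_u(1)+A_H(1))$ after normalization, which is useless. The problem is the device you propose to fix this. The paper does not decompose the solution at this stage; it chooses the test function to be $\psi_1=\psi_2=\Gamma\phi$, where $\Gamma$ is the backward heat kernel centered at $z_0$ with width $\gamma$, i.e.\ $\Gamma(x,t)=\big(4\pi^2(\gamma^2+t_0-t)^2\big)^{-1}\exp\big(-|x-x_0|^2/(2(\gamma^2+t_0-t))\big)$, and $\phi$ a cut-off equal to $1$ on $Q(z_0,1/2)$. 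Since $\Gamma_t+\Delta\Gamma=0$, the quadratic term reduces to $\int(|u|^2+|H|^2)(\Gamma\phi_t+\Gamma\Delta\phi+2\nabla\phi\cdot\nabla\Gamma)$, which is supported in $Q^{+}(z_0,1)\setminus Q^{+}(z_0,1/2)$ where all these weights are $O(1)$ uniformly in $\gamma$; combined with $\Gamma\phi\ge c\gamma^{-4}$ on $Q^{+}(z_0,\gamma)$ (which normalizes the left-hand side) and $|\phi\nabla\Gamma|+|\Gamma\nabla\phi|\le N\gamma^{-5}$ (which produces the $\gamma^{-3}$ in front of the nonlinear bracket), this yields \eqref{ereeeeeeeeeeee} in one stroke. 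This is the standard Caffarelli--Kohn--Nirenberg/Lin trick, and it is the missing idea in your write-up.

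The alternative you sketch --- splitting $u=v+w$ via the forced/homogeneous Stokes systems and invoking Campanato decay for the homogeneous part --- does not deliver the inequality as stated. The forced part $v$ is controlled by Lemma \ref{lest_1} through $\|\tilde f\|_{L_{4\kappa/(\kappa+4),\lambda}}$, which by \eqref{eq5.37} is a product of powers of $A_u,E_u,A_H,E_H$; transferring this back into $A_w(1)+E_w(1)\lesssim A_u(1)+A_H(1)+(\text{forced contributions})$ therefore injects terms that are quadratic in the energy quantities and carry \emph{no} small prefactor, neither $\gamma^2$ nor $F_u^{p,q}(\rho)$. No such term appears on the right-hand side of \eqref{ereeeeeeeeeeee}, and such a term could not be absorbed in the iteration of Proposition \ref{prop3}, where every contribution must come with $\gamma^2$, $\epsilon_1$, or $\epsilon'\gamma^{\text{positive}}$. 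There is also the unaddressed difficulty that the local energy inequality \eqref{energy} is an inequality for $(u,H)$, not for $(w,\tilde h)$, whose equations still couple to the full solution, so "running the energy method on the homogeneous part" is not a legitimate substitution. In short: right diagnosis, wrong cure; replace the linear-comparison step by the backward heat kernel and the rest of your argument goes through essentially as in the paper.
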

\begin{proof} As before, we assume $\rho=1$. Define the backward heat kernel as
\begin{equation*}
\Gamma(x,t)=\dfrac{1}{4\pi^2(\gamma^2+t_0-t)^2}e^{-\frac{|x-x_0|^2}{2(\gamma^2+t_0-t)}}.
\end{equation*}
In the energy inequality \eqref{energy} we put $t=t_0$ and choose $\psi_1=\psi_2=\Gamma\phi$, where $\phi\in C_0^\infty( B(x_0,1)\times (t_0-1,t_0+1))$ is a suitable smooth cut-off functions satisfying
\begin{align}
\nonumber 0\leq \phi\leq 1 \ \ \text{in} \ \ \mathbb{R}^4\times \bR, \ \ \phi\equiv1 \ \ \text{in} \ \ Q(z_0,1/2),\\
|\nabla\phi| \leq N, \ \ |\nabla^2 \phi| \leq N \ \ |\partial_t\phi|\leq N\quad \text{in} \ \ \mathbb{R}^4\times\bR. \label{qq}
\end{align}
By using the equality
\begin{equation*}
\Delta \Gamma + \Gamma_t =0,
\end{equation*}
we have
\begin{align} \nonumber&\quad\int_{B^{+}(x_0,1)}(|u(x,t)|^2+|H(x,t)|^2)\Gamma(t,x)\phi(x,t) \,dx +2 \int_{Q^{+}(z_0,1)}(|\nabla u|^2+|\nabla H|^2) \Gamma \phi \,dz\\ \nonumber&\leq \int_{Q^{+}(z_0,1)}\big\{(|u|^2+|H|^2)(\Gamma \phi_t+\Gamma \Delta \phi+2\nabla\phi\nabla\Gamma)\\\nonumber &\quad +(|u|^2+2|\Pi-[\Pi]_{x_0,1}|+|H|^2)u\cdot (\Gamma\nabla \phi+\phi\nabla \Gamma)\big\}\,dz \\&\quad+2\int_{Q^{+}(z_0,1)}|H|^2|u||\nabla\Gamma\phi+\Gamma\nabla\phi|\,dz. \label{ggg}
\end{align}
With straightforward computations, it is easy to see the following three properties:\\
(i) For some constant $c>0$, on $\bar{Q^{+}}(z_0,\gamma)$ it holds that \begin{equation*}
\Gamma \phi = \Gamma \geq c \gamma^{-4}.
\end{equation*}
(ii) For any $z \in Q^{+}(z_0,1)$, we have \begin{equation*} |\Gamma(z)\phi(z)| \leq N \gamma^{-4},\ \ |\phi(z)\nabla\Gamma(z)|+|\nabla\phi(z)\Gamma(z)| \leq N \gamma^{-5}. \end{equation*}
(iii) For any $z \in Q^{+}(z_0,1)\setminus Q^{+}(z_0,\gamma)$, we have \begin{equation*} |\Gamma(z)\phi_t(z)|+|\Gamma(z)\Delta\phi(z)|+|\nabla\phi\nabla\Gamma| \leq N.
\end{equation*}
Then these properties together with \eqref{qq}, \eqref{ggg} and H\"older's inequality yield
\begin{align*}
A_u(\gamma)+E_u(\gamma)+&A_H(\gamma)+E_H(\gamma) \leq N\big[\gamma^2(A_u(1)+E_u(1))\\&+\gamma^{-3}F_u^{p,q}(1)([F_u^{2\kappa,2\lambda}(1)]^2+G^{\kappa,\lambda}(1)+[F_H^{2\kappa,2\lambda}(1)]^2))\big]. \end{align*}
Thus, the lemma is proved.
\end{proof}

\begin{proposition}
\label{prop3}
Suppose that for every pair $(p,q)$ satisfying $\frac{4}{p}+\frac{2}{q}\leq 2$, $2<q\leq\infty$, 
 then for any $\epsilon_0 >0$, there exists $\epsilon_1 >0$ depending on $p,q$ small such that the following is true. For any $z_0=(x_0,t_0)$ satisfying $\omega(z_0,R)=Q^{+}(z_0,R)$ for some small $R$ and
\begin{equation}
\limsup_{r\searrow 0} F^{p,q}_u(r) \leq \epsilon_1,
\label{econd0}
\end{equation}
we can find $\rho_0$ sufficiently small such that
\begin{equation*}
A_u(\rho_0)+E_u(\rho_0)+A_H(\rho_0)+E_H(\rho_0)+C_u(\rho_0)+C_H(\rho_0)+D(\rho_0) \leq \epsilon_0.
\label{e_conl}
\end{equation*}
\end{proposition}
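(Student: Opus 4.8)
The plan is to run a \emph{coupled} iteration on the energy quantities and on the pressure quantity, exploiting the fact that the small factor $F_u^{p,q}$ multiplies every dangerous term in Lemma \ref{eeeeee}. Write $\Phi(r)=A_u(r)+E_u(r)+A_H(r)+E_H(r)$ and $G(r)=G^{\kappa,\lambda}(r)$, where $(\kappa,\lambda)$ is the H\"older–conjugate pair attached to $(p,q)$ as in Lemma \ref{eeeeee}; the admissibility constraints there ($1<\lambda<2$, $\frac4\kappa+\frac2\lambda\ge4$, $\kappa>\frac43$) are precisely those needed to invoke Lemma \ref{G_est_lemma}. After translating so that $z_0=0$, I first fix a base radius $\rho_*\le R$ so small that the hypothesis $\limsup_{r\searrow0}F_u^{p,q}(r)\le\epsilon_1$ forces $F_u^{p,q}(r)\le 2\epsilon_1$ for all $r\le\rho_*$; every cylinder $Q^{+}(z_0,\gamma^k\rho_*)$ is again a half-cylinder, so Lemmas \ref{lem2.5}, \ref{G_est_lemma} and \ref{eeeeee} apply at all such scales.

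The first technical step is to remove the mixed norms from the right side of \eqref{ereeeeeeeeeeee}. Since admissibility forces $2\kappa\in(2,4)$, Lemma \ref{interpolation} together with H\"older's inequality in time bounds $[F_u^{2\kappa,2\lambda}(r)]^2$ and $[F_H^{2\kappa,2\lambda}(r)]^2$ by $N\Phi(r)$ (both interpolation exponents clear the scaling and the resulting powers of $A_u,E_u$, resp. $A_H,E_H$, sum to one). Feeding this into Lemma \ref{eeeeee} gives the energy inequality
\begin{equation*}
\Phi(\gamma\rho)\le N\gamma^2\Phi(\rho)+N\gamma^{-3}\epsilon_1\big(\Phi(\rho)+G(\rho)\big),
\end{equation*}
while \eqref{G_est}, after bounding the $A$–$E$ products (whose exponents also sum to one) by $\Phi$ and the terms $A_u^{1/2}+E_u^{1/2}$ by $\Phi^{1/2}$, supplies the companion pressure inequality
\begin{equation*}
G(\gamma\rho)\le N\gamma^{-\beta}\Phi(\rho)+N\gamma^{\alpha}\big(G(\rho)+\Phi(\rho)^{1/2}\big),\qquad \beta=\tfrac4\kappa+\tfrac2\lambda-2>0,\ \ \alpha=3-\tfrac2\lambda-\tfrac4{\kappa'}>0.
\end{equation*}

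The heart of the argument is a two-stage parameter choice, and this is where the main obstacle lies: the \emph{negative} power $\gamma^{-\beta}$ in the pressure inequality forbids a naive scalar iteration. The remedy is the weighted quantity $\Psi(r)=\Phi(r)+\delta G(r)$, for which the two displays combine to
\begin{equation*}
\Psi(\gamma\rho)\le\big(N\gamma^2+N\gamma^{-3}\epsilon_1+\delta N\gamma^{-\beta}\big)\Phi(\rho)+\big(N\gamma^{-3}\epsilon_1+\delta N\gamma^{\alpha}\big)G(\rho)+\delta N\gamma^{\alpha}\Phi(\rho)^{1/2}.
\end{equation*}
I choose $\gamma$ small so that $N\gamma^2,N\gamma^{\alpha}\le\theta/4$ for a fixed $\theta<1$, then $\delta$ small so that $\delta N\gamma^{-\beta}\le\theta/4$ (this tames the single negative power), and finally $\epsilon_1$ small so that $N\gamma^{-3}\epsilon_1\le\theta/4$ and $N\gamma^{-3}\epsilon_1/\delta\le\theta/2$. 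These give $\Psi(\gamma\rho)\le\theta\Psi(\rho)+C'\Psi(\rho)^{1/2}$ with $C'=\delta N\gamma^{\alpha}$ small, so that $\Psi(\gamma\rho)\le(\theta+C')\Psi(\rho)$ with $\theta+C'<1$ whenever $\Psi(\rho)\ge1$. Hence, however large $\Psi$ is at the base scale, after finitely many (solution-dependent) steps $\Psi$ drops below the universal threshold $1$ and stays there; thus there is $\rho_{**}\le\rho_*$ with $\Phi(r)+G(r)\le M_0$ for all $r\le\rho_{**}$, where $M_0$ depends only on $\gamma,\delta$ and \emph{not} on $\epsilon_1$. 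Running the same scheme on \eqref{C_est}, \eqref{C_est2} and \eqref{D_est} shows $C_u,C_H,D$ are likewise bounded on $(0,\rho_{**}]$.

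Finally I upgrade boundedness to smallness. With $\gamma$ fixed and $G\le M_0$ on $(0,\rho_{**}]$, the energy inequality becomes $\Phi(\gamma\rho)\le\tfrac12\Phi(\rho)+N\gamma^{-3}\epsilon_1M_0$; iterating from $\rho_{**}$ yields $\limsup_k\Phi(\gamma^k\rho_{**})\le\eta_0:=2N\gamma^{-3}\epsilon_1M_0$. The crucial point is that the obstructive term $\Phi^{1/2}$ does \emph{not} intervene here, precisely because in \eqref{ereeeeeeeeeeee} both the pressure and the mixed norms appear only under the prefactor $\epsilon_1$, so only the boundedness of $G$ is used; since $M_0$ is independent of $\epsilon_1$, shrinking $\epsilon_1$ makes $\eta_0$ as small as we wish. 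Substituting the resulting smallness of $A_u,E_u,A_H,E_H$ into \eqref{C_est}, \eqref{C_est2} and \eqref{D_est}, whose remaining error terms are either small powers of $\eta_0$ or carry a positive power of $\gamma$ against the bounded quantities $C_u,C_H,D$, and iterating each once more, makes $C_u$, $C_H$ and $D$ small as well. Choosing $\rho_0=\gamma^k\rho_{**}$ with $k$ large then gives $A_u(\rho_0)+E_u(\rho_0)+A_H(\rho_0)+E_H(\rho_0)+C_u(\rho_0)+C_H(\rho_0)+D(\rho_0)\le\epsilon_0$, which is the assertion.
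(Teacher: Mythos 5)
Your proposal is correct and follows essentially the same route as the paper: the same reduction of the mixed norms $[F_u^{2\kappa,2\lambda}]^2,[F_H^{2\kappa,2\lambda}]^2$ to $A+E$ via Lemma \ref{interpolation}, the same coupling of the energy iteration \eqref{ereeeeeeeeeeee} with the pressure iteration \eqref{G_est} through a weighted sum $\Phi+\delta G^{\kappa,\lambda}$ (the paper's $\epsilon' G^{\kappa,\lambda}$) with the parameters fixed in the same order ($\gamma$, then the weight, then $\epsilon_1$), and the same final treatment of $C_u,C_H$ by interpolation and of $D$ by a separate iteration of \eqref{D_est}. The only cosmetic difference is that you handle the sublinear term $\Phi^{1/2}$ by a two-stage iteration (boundedness independent of $\epsilon_1$, then smallness), whereas the paper absorbs it by Young's inequality into $\tfrac12\Phi+\mathrm{const}$ and reaches smallness in a single iteration; both are standard and equivalent here.
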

\begin{proof}
By \eqref{ereeeeeeeeeeee} and \eqref{econd0}, for some small $\rho$, one can obtain
\begin{multline*}
A_u(\gamma \rho)+E_u(\gamma \rho)+A_H(\gamma\rho)+E_H(\gamma\rho)\leq N \gamma^2(A_u(\rho)+A_H(\rho)) \\ +N\gamma^{-3}\epsilon_1\big[[F_u^{2\kappa,2\lambda}(\rho)]^2+G^{\kappa,\lambda}(\rho)+[F_H^{2\kappa,2\lambda}(\rho)]^2\big],
\end{multline*}
Since $\frac{4}{p}+\frac{2}{q}\leq 2, q>2$, then $2 \leq 2\kappa <4,\frac{2\kappa-2}{\kappa}\lambda\leq 1$, and using the interpolation inequality \eqref{interpolation} and Young's inequality, we can bound $[F_u^{2\kappa,2\lambda}(\rho)]^2$ by
\begin{align*}
&\quad N\rho^{2-\frac{4}{\kappa}-\frac{2}{\lambda}}\big(\int_{t_0-\rho^2}^{t_0}\rho^{-\frac{4\kappa-4}{\kappa}\lambda}(\int_{B^{+}_{\rho}}|u|^2\,dx)^{\lambda}\,dt\big)^{\frac{1}{\lambda}}\\
&\quad\quad+N\rho^{2-\frac{4}{\kappa}-\frac{2}{\lambda}}\big(\int_{t_0-\rho^2}^{t_0}(\int_{B^{+}_{\rho}}|\nabla u|^2\,dx)^{\frac{2\kappa-2}{\kappa}\lambda}(\int_{B^{+}_{\rho}}|u|^2\,dx)^{\frac{2-\kappa}{\kappa}\lambda}\,dt\big)^{\frac{1}{\lambda}}\\
&\leq N A_u(\rho)+N E_u^{\frac{2\kappa-2}{\kappa}}(\rho)A_u^{\frac{2-\kappa}{\kappa}}(\rho)\\
&\leq N(A_u(\rho)+E_u(\rho)).
\end{align*}
Similarly, $[F_H^{2\kappa,2\lambda}(\rho)]^2$ can be bounded by $N(A_H(\rho)+E_H(\rho))$.
Then, combining with \eqref{G_est} and using Young's inequality again, we have
\begin{multline}
\label{ppppp}
A_u(\gamma \rho)+E_u(\gamma \rho)+A_H(\gamma\rho)+E_H(\gamma\rho)+\epsilon'G^{\kappa,\lambda}(\gamma\rho)\\\leq N \gamma^2(A_u(\rho)+A_H(\rho)) +N\gamma^{-3}\epsilon_1(A_u(\rho)+E_u(\rho)+A_H(\rho)+E_H(\rho))\\+N\gamma^{-3}\epsilon_1G^{\kappa,\lambda}(\rho)+
N\epsilon'\gamma^{2-\frac{4}{\kappa}-\frac{2}{\lambda}}(A_u(\rho)+E_u(\rho)+A_H(\rho)+E_H(\rho))
\\\quad+N\epsilon'\gamma^{3-\frac{2}{\lambda}-\frac{4}{\kappa'}}G^{\kappa,\lambda}(\rho)+N\epsilon'\gamma^{3-\frac{2}{\lambda}-\frac{4}{\kappa'}}(A_u(\rho)+E_u(\rho)))
+N\epsilon'\gamma^{3-\frac{2}{\lambda}-\frac{4}{\kappa'}}.
\end{multline}
For any small $\epsilon_0'>0$, we choose sufficiently small $\gamma, \epsilon',\epsilon_1$ such that
\begin{align*}
N\gamma^2+N\gamma^{3-\frac{2}{\lambda}-\frac{4}{\kappa'}}<\frac{1}{8}\\
N\epsilon'\gamma^{2-\frac{4}{\kappa}-\frac{2}{\lambda}}<\frac{1}{16},\quad
N\epsilon'\gamma^{3-\frac{2}{\lambda}-\frac{4}{\kappa'}}<\frac{1}{16}\epsilon_0',\\
N\gamma^{-3}\epsilon_1<\frac{1}{16},\quad N\gamma^{-3}\epsilon_1<\frac{1}{16}\epsilon',
\end{align*}
Then by using \eqref{ppppp} with a standard iteration argument, we can get
\begin{equation*}
A_u(\rho_1)+E_u(\rho_1)+A_H(\rho_1)+E_H(\rho_1)+\epsilon'G^{\kappa,\lambda}(\rho_1) \leq \epsilon_0'
\end{equation*}
for some $\rho_1$ small enough.

Owing to Lemma \ref{interpolation} with $q=3$, we get
\begin{equation}
C_u(\rho)\leq N\big[A_u(\rho)+E_u(\rho)\big]^{3/2},\quad C_H(\rho)\leq N\big[A_H(\rho)+E_H(\rho)\big]^{3/2},
\label{hhh}
\end{equation}
which imply that
\begin{align*}
C_u(\rho_1)+C_H(\rho_1)\leq N\epsilon_0'^{\frac{3}{2}}.
\end{align*}
To estimate $D(\rho)$, recall \eqref{D_est}, we have
\begin{equation*}
D(\gamma\rho_1)\leq N\gamma^{9/4}D(\rho_1)+N\gamma^{-3}(A_u(\rho_1)+E_u(\rho_1)+A_H(\rho_1)+E_H(\rho_1)),
\end{equation*}
then we can use the standard iteration argument again to obtain
\begin{equation*}
D(\gamma^K\rho_1)\leq 2N\gamma^{-3}\epsilon_0'
\end{equation*}
with sufficiently large $K$.

Lastly, for any small $\epsilon_0$, since the choice of $\gamma$ is independent on $\epsilon_0'$, we choose sufficiently small $\epsilon_0'$ such that $N\epsilon_0'^{3/2}\leq\epsilon_0$, $2N\gamma^{-3}\epsilon_0'\leq\epsilon_0$, and set $\rho_0=\gamma^K\rho_1$ to complete the proof.
\end{proof}

\begin{proposition}
\label{prop1}
For any $\epsilon_0 >0$, there exists $\epsilon_1 >0$ small such that the following is true. For any $z_0=(x_0,t_0)$ satisfying $\omega(z_0,R)=Q^{+}(z_0,R)$ for some small $R$ and
\begin{equation}
\limsup_{r \searrow 0} E_u(r)\leq \epsilon_1, \quad \sup_{0<r<r_1}A_H(r)\leq M<\infty \,\,\text{for some } r_1
\label{ehcond2}
\end{equation}
or
\begin{equation}
\limsup_{r\searrow 0}E_u(r) \leq \epsilon_1,  \quad \sup_{0<r<r_1}E_H(r)\leq M <\infty \,\,\text{for some } r_1
\label{ehcond1}
\end{equation}
we can find $\rho_0$ sufficiently small such that
\begin{equation*}
A_u(\rho_0)+E_u(\rho_0)+C_u(\rho_0)+A_H(\rho_0)+E_H(\rho_0)+C_H(\rho_0)+D(\rho_0) \leq \epsilon_0.
\end{equation*}
\end{proposition}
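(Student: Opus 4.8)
The plan is to run the same scheme as in Proposition \ref{prop3}, but to replace the universal small prefactor that there came from $F_u^{p,q}\le\epsilon_1$ by one manufactured out of the smallness of $E_u$. The device for this is Lemma \ref{lem2.5}: its estimate \eqref{C_est} controls $C_u(\gamma\rho)$ by $N\gamma\,C_u(\rho)$ plus terms carrying a factor of $E_u$, so once $A_u$ is under control, smallness of $E_u$ forces $C_u$ to be small. Since every pressure and cross term in the energy estimate \eqref{ers} appears with a weight $C_u^{1/3}$, a small $C_u$ then plays exactly the role of the assumed smallness of $F_u^{p,q}$ in Proposition \ref{prop3}. The two hypotheses on $H$ are used to keep $C_H$ and the pressure quantity $D$ bounded, so that these $C_u^{1/3}$-weighted contributions are genuinely small. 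I would normalize so that $E_u(\rho)\le2\epsilon_1$ on all scales $\rho\le\rho_*$, together with $A_H(\rho)\le M$ (resp. $E_H(\rho)\le M$), and track along the dyadic scales $\rho_k=\gamma^k\rho_*$ the quantities $\Psi:=A_u+E_u+A_H+E_H$, $C_u$, $C_H$ and $D$.

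The core is a coupled iteration built from four inputs: the energy estimate \eqref{ers}, which gives $\Psi(\gamma\rho)\le N\gamma^2(A_u(\rho)+A_H(\rho))+N\gamma^{-3}[C_u+C_u^{1/3}D^{2/3}+C_u^{1/3}C_H^{2/3}]$; the pressure estimate \eqref{D_est}; Lemma \ref{lem2.5} for $C_u$ and $C_H$; and the interpolation bound $C_H\le N(A_H+E_H)^{3/2}$ from Lemma \ref{interpolation} with $q=3$. I would fix $\gamma$ once and for all small enough that the homogeneous coefficients $N\gamma^2$ in \eqref{ers}, $N\gamma^{9/4}$ in \eqref{D_est}, and $N\gamma$ in \eqref{C_est} are at most $1/2$. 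A simultaneous induction then shows that $\Psi$ and $D$ stay bounded and that $C_u$ contracts, the smallness of $E_u$ being used to make the inhomogeneous term $A_u^{1/2}E_u$ appearing in \eqref{C_est} and \eqref{D_est} small, while $A_H^{1/2}E_H$ is merely kept bounded by the hypothesis on $H$. After finitely many steps $C_u(\rho_k)$ is small, whereupon the bracket in \eqref{ers} is small and $\Psi(\rho_k)$ decays geometrically to a small value. Feeding this back through Lemma \ref{interpolation} makes $C_u,C_H$ small, and a last application of \eqref{D_est}, now with $A_u,E_u,A_H,E_H$ small on its right-hand side, makes $D$ small — exactly as in the closing paragraphs of the proof of Proposition \ref{prop3}. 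Taking $\rho_0=\gamma^{K}\rho_*$ for $K$ large and summing the small quantities yields the conclusion.

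The two hypotheses \eqref{ehcond2} and \eqref{ehcond1} are not symmetric and I would handle the resulting asymmetry at the start of the induction. When $E_H$ is bounded, $A_H$ comes along inside $\Psi$ and the scheme above applies directly. When only $A_H$ is bounded, the gradient quantity $E_H$, and hence $C_H$, is not supplied by the hypothesis; I would recover it from the energy inequality \eqref{AE_est_2}, which bounds $E_H(\rho/2)$ in terms of $C_u,C_H,D$, and close this against the interpolation bound $C_H\le N(A_H+E_H)^{3/2}$, so that $E_H$ and $C_H$ enter the induction as bounded quantities.

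I expect the genuine obstacle to be controlling the pressure $\Pi$ (the quantity $D$) and the magnetic nonlinearity $H\cdot\nabla H$ when no scale-invariant norm of $u$ is assumed small. Unlike Proposition \ref{prop3}, there is no a priori small factor multiplying the bad terms, so they cannot be absorbed outright; the bounds on $\Psi$, $C_u$, $C_H$ and $D$ must be propagated down in scale through the genuinely coupled and mildly superlinear inequalities \eqref{ers} and \eqref{D_est}, and it is exactly here that the boundedness hypothesis on $H$, together with the contractive structure of Lemma \ref{lem2.5}, is indispensable: the $A_u^{3/2}$-type growth that a naive interpolation for $C_u$ would produce is precisely what \eqref{C_est} avoids by hiding it in the contractive term $N\gamma\,C_u(\rho)$.
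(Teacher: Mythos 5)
Your overall strategy (a coupled contraction in scale driven by the smallness of $E_u$ and the boundedness of one $H$-quantity) is in the right spirit, but as written the iteration does not close. The phrase ``$\Psi$ and $D$ stay bounded while $C_u$ contracts'' hides a circularity: at the starting scale the quantities $A_u$, $C_u$, $C_H$, $D$ are finite but \emph{solution-dependent}, and every inhomogeneous term you rely on being small --- $A_u^{1/2}E_u$ and $A_u^{3/4}E_u^{3/4}$ in \eqref{C_est}, $C_u^{1/3}D^{2/3}$ and $C_u^{1/3}C_H^{2/3}$ in \eqref{ers} --- is small only relative to those bounds. If $\epsilon_1$ must be chosen after a solution-dependent bound on $\Psi$, the universal $\epsilon_1$ claimed in the proposition is lost; and in the first phase of your scheme, before $C_u$ has contracted, the term $N\gamma^{-3}C_u(\rho)$ in \eqref{ers} carries no smallness at all, so nothing prevents $\Psi$ from growing. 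The paper avoids this with two devices you do not use: the boundary Poincar\'e inequality (exploiting $u=0$ on the flat boundary) gives the \emph{single-scale} bounds $F_u^{4,2}(\rho)\le NE_u(\rho)$ and $C_u(\rho)\le NA_u^{1/2}(\rho)E_u(\rho)\le N\epsilon_1(1+A_u(\rho))$, and the energy estimate is rederived in the form \eqref{ers2}, with $C_u^{1/3}C_H^{2/3}$ replaced by $F_u^{4,2}(\rho)F_H^{8/3,4}(\rho)\le NE_u(\rho)(A_H(\rho)+E_H(\rho))$. After these substitutions every dangerous term is a small universal coefficient times a quantity linear in $A_u+E_u+A_H+E_H+\epsilon'D$, and a single linear contraction of the form $\Phi(\gamma\rho)\le\tfrac14\Phi(\rho)+\eta$ with universal $\eta$ finishes the argument regardless of the initial size of $\Phi$. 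Your scheme could probably be repaired by folding $C_u$, $C_H$ and $D$ into one weighted functional and carrying out this bookkeeping, but that is precisely the content that has been omitted.

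Second, your treatment of the case \eqref{ehcond2} is not workable: \eqref{AE_est_2} bounds $E_H(\rho/2)$ by $N(C_H^{2/3}(\rho)+\cdots)\le N(A_H(\rho)+E_H(\rho))+\cdots$ with $N\ge 1$, so it cannot be ``closed against'' the interpolation $C_H\le N(A_H+E_H)^{3/2}$ to make $E_H$ a bounded input; the resulting inequality $E_H(\rho/2)\le NE_H(\rho)+\cdots$ is not absorbable. In the paper $E_H$ is never shown to be bounded a priori in this case: it simply sits on the left-hand side of \eqref{ers2} at the smaller scale, while its occurrences on the right (through $F_H^{8/3,4}$ and through \eqref{D_est}, where $A_H^{1/2}E_H\le M^{1/2}E_H$) carry the small factors $N\gamma^{-3}\epsilon_1$ and $N\epsilon'\gamma^{3}M$ and are absorbed into the contraction.
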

\begin{proof}

First, we derive the following inequality,
\begin{multline}
\label{ers2}
A_u(\gamma \rho)+E_u(\gamma \rho)+A_H(\gamma\rho)+E_H(\gamma\rho)\leq N \gamma^2(A_u(\rho)+A_H(\rho)) \\ +N\gamma^{-3}\big[C_u(\rho)+C_u^{1/3}(\rho)D^{2/3}(\rho)+F_u^{4,2}(\rho)F_H^{8/3,4}(\rho)\big],
\end{multline}
which is slightly different from \eqref{ers}.

To derive the above inequality, we use the same cut-off function $\psi_1=\psi_2=\Gamma\phi$ used in the proof of Lemma \ref{eeeeee} and estimate the term $\int_{Q^{+}(z_0,1)}|H|^2|u||\nabla\Gamma\phi+\Gamma\nabla\phi|\,dz$ in \eqref{ggg} as:
\begin{equation*}
\int_{Q^{+}(z_0,1)}|H|^2|u||\nabla\Gamma\phi+\Gamma\nabla\phi|\,dz\leq \gamma^{-5}F_u^{4,2}(1)F_H^{8/3,4}(1).
\end{equation*}
Then by \eqref{ggg} and H\"{o}lder's inequality, we can get
\begin{align*}
\gamma^{-2}[A_u(\gamma)+E_u(\gamma)+&A_H(\gamma)+E_H(\gamma)] \leq N\big[(A_u(1)+E_u(1))\\&+\gamma^{-5}C_u(1)\gamma^{-5}C_u^{1/3}(1)D^{2/3}(1)+\gamma^{-5}F_u^{4,2}(1)F_H^{8/3,4}(1)], \end{align*}
and \eqref{ers2} is proved by scaling.

By the interpolation inequality \eqref{interpolation} and the boundary Poinc\'are inequality, $F_u^{4,2}(\rho)$ can be bounded by
\begin{equation*}
 N\rho^{-1}\big(\int_{t_0-\rho^2}^{t_0}(\rho^{-2}\int_{B^{+}_{\rho}}|u|^2\,dx+\int_{B^{+}_{\rho}}|\nabla u|^2\,dx)\,dt\big)^{\frac{1}{2}}\leq N E_u(\rho).
 \end{equation*}
Using the interpolation inequality \eqref{interpolation} again, we also have
 \begin{align*}
 &F_{H}^{8/3,4}(\rho)\leq N(A_H(\rho)+E_H(\rho)),\\
 &C_u(\rho)\leq N A_u^{1/2}(\rho)(F_u^{4,2}(\rho)+E_u(\rho))\leq N A_u^{1/2}(\rho)E_u(\rho).
 \end{align*}


If the condition \eqref{ehcond2} is satisfied, then for some small $\rho$, we can use \eqref{ers2}, \eqref{D_est} and Young's inequality to obtain
\begin{align}
\nonumber &\quad A_u(\gamma \rho)+E_u(\gamma \rho)+A_H(\gamma\rho)+E_H(\gamma\rho)+\epsilon'D(\gamma\rho)\\
\nonumber & \leq N \gamma^2(A_u(\rho)+A_H(\rho))+ N\gamma^{-3}\big[A_u^{1/2}(\rho)E_u(\rho)+A_u^{1/6}(\rho)E_u^{1/3}(\rho)D^{2/3}(\rho)\big]\\
\nonumber &\quad +N\gamma^{-3}E_u(\rho)(A_H(\rho)+E_H(\rho)) + N\epsilon'\gamma^{-3}(A_u^{1/2}(\rho)E_u(\rho)+A_H^{1/2}(\rho)E_H(\rho))\\
\nonumber &\quad +N \epsilon'\gamma^{9/4}(D(\rho)+A_u^{3/4}(\rho)+E_u^{3/4}(\rho))\\
\nonumber &\leq N\gamma^2(A_u(\rho)+A_H(\rho))+N\epsilon_1A_u(\rho)+N\gamma^{-6}\epsilon_1
+N\epsilon_1^{1/6}D(\rho)+N\epsilon_1^{2/3}A_u(\rho)
\\\nonumber &\quad +N\epsilon_1^{2/3}\gamma^{-3}+N\gamma^{-3}\epsilon_1(A_H(\rho)+E_H(\rho))+N\epsilon'\epsilon_1A_u(\rho)+N\epsilon'\epsilon_1\gamma^{-6}
\\&\quad +N\epsilon'\gamma^{3}ME_H(\rho)+N \epsilon'\gamma^{9/4}D(\rho)+N\gamma\epsilon'^4.
\label{4au}
\end{align}
For any small $\epsilon_0$, choose sufficiently small $\gamma, \epsilon', \epsilon_1$ such that
\begin{align*}
N\gamma^2+N\gamma^{9/4} \leq \frac{1}{16},\\
N\epsilon'\gamma^3M\leq\frac{1}{16},\quad N\gamma\epsilon'^4<\frac{1}{4}\epsilon_0,\\
N\epsilon_1^{1/6}<\frac{1}{4}\epsilon',\quad N\gamma^{-6}\epsilon_1\epsilon'+N\gamma^{-3}\epsilon_1^{3/2}<\frac{1}{4}\epsilon_0,\\
N\gamma^{-3}\epsilon_1+N\epsilon_1^{2/3}+N\epsilon'\epsilon_1+N\epsilon_1\leq \frac{1}{16},
\end{align*}
then with a similar argument in the proof of Proposition \ref{prop3}, we can get the conclusion.

When \eqref{ehcond1} is satisfied, then in the last line of \eqref{4au}, the term $N\epsilon'\gamma^{3}ME_H(\rho)$ becomes $N\epsilon'\gamma^{3}MA_H(\rho)$, and the proof is almost the same.

\end{proof}

In the rest of paper, we use $A(\rho)=A_u(\rho)+A_H(\rho), C(\rho)=C_u(\rho)+C_H(\rho), E(\rho)=E_u(\rho)+E_H(\rho)$ to simplify the notation.

\subsection{Step 2.}
In the second step, we will study the decay property of $A$, $C$, $E$, and $D$ as the radius $\rho$ goes to zero.
\begin{proposition}
\label{prop2}
There exists $\epsilon_0>0$ satisfying the following property. Suppose that for some $z_0=(x_0,t_0)$, $\rho_0 >0$, and $\omega(z_0,\rho_0)=Q^{+}(z_0,\rho_0)$ satisfying
\begin{equation}
C(\rho_0)+D(\rho_0)\leq \epsilon_0.
\label{cond_cdf}
\end{equation}
Then we can find $N>0$ and $\alpha_0 \in (0,1)$ such that for any $\rho \in (0,\rho_0/4)$ and $z_1 \in Q(z_0,\rho_0/4)\cap (\partial\Omega\times(t_0-\rho_0^2/16,t_0))$, the following inequality will hold uniformly
\begin{align}
A(\rho,z_1)+C^{2/3}(\rho,z_1)+E(\rho,z_1)+ D(\rho,z_1)\leq N \rho ^{\alpha_0},
\label{cdf}
\end{align}
where $N$ is a positive constant independent of $\rho$ and $z_1$.
\end{proposition}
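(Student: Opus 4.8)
The plan is to reduce to a fixed boundary base point and then iterate three coupled recursive inequalities until the slow-decaying pressure contribution is the only obstruction, which is then removed by the iteration of \cite{Dong_13b}. First I would fix $z_1$ and prove \eqref{cdf} with constants independent of $z_1$. For $z_1\in Q(z_0,\rho_0/4)\cap(\partial\Omega\times(t_0-\rho_0^2/16,t_0))$ and $\rho\le\rho_0/4$ the half-cylinders nest, $Q^{+}(z_1,\rho)\subset Q^{+}(z_0,\rho_0/2)\subset Q^{+}(z_0,\rho_0)$, so the hypothesis \eqref{cond_cdf} transfers to a comparable scale, $C(\rho_0/4,z_1)+D(\rho_0/4,z_1)\le N\epsilon_0$, using monotonicity of the integrals and the fact that $[\Pi]$ is, up to a universal factor, the best constant approximation of $\Pi$ in $L_{3/2}$. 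By \eqref{AE_est_2} this also controls $A+E$, so $\Theta(\rho_0/4,z_1)\le N\epsilon_0$, where $\Theta:=A+E+C^{2/3}+D$. Using the recursions below together with this smallness, a preliminary iteration shows $\Theta(\cdot,z_1)\le N\epsilon_0$ at all dyadic scales $\le\rho_0/4$, which is what makes the genuinely nonlinear terms negligible at every step. Since every constant is universal, it suffices to treat one such $z_1$, which I now suppress.

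\textbf{The three recursions and a triangular contraction.} Writing $\Phi:=A+E$, the inputs are the energy estimate \eqref{ers} of Lemma \ref{eeeeee}, whose leading coefficient is the \emph{gain} $\gamma^2$ (this is why the backward heat kernel, rather than the plain cutoff of Lemma \ref{aelemma}, must be used),
\[
\Phi(\gamma\rho)\le N\gamma^{2}\Phi(\rho)+N\gamma^{-3}\big(C(\rho)+C_u^{1/3}(\rho)D^{2/3}(\rho)\big),
\]
the interpolation estimate \eqref{C_est}+\eqref{C_est2}, giving $C(\gamma\rho)\le N\gamma\,C(\rho)+N\gamma^{-9/2}\Phi^{3/2}(\rho)$, and the pressure estimate \eqref{D_est},
\[
D(\gamma\rho)\le N\gamma^{9/4}D(\rho)+N\gamma^{-3}\Phi^{3/2}(\rho)+N\gamma^{9/4}\big(A_u^{3/4}(\rho)+E_u^{3/4}(\rho)\big).
\]
Assigning degree one to each of $A,E,C^{2/3},D$ (so $C$ has degree $3/2$ and the cross term $C_u^{1/3}D^{2/3}$ has degree $7/6$), every term on the right except the diagonal contractions $N\gamma^{2}\Phi$, $N\gamma C$, $N\gamma^{9/4}D$ and the last (slow) pressure term is of degree $>1$, hence bounded by $\epsilon_0^{\sigma}\Theta(\rho)$ for some $\sigma>0$. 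In the variables $(\Phi,C^{2/3},D)$ the linear part is lower triangular with diagonal $(N\gamma^{2},N\gamma^{2/3},N\gamma^{9/4})$: the only linear cross term is $C^{2/3}(\gamma\rho)\le N\gamma^{2/3}C^{2/3}+N\gamma^{-3}\Phi$, while $\Phi$ depends on $C,D$ and $D$ on $\Phi$ only through the degree-$>1$ terms. Fixing $\gamma$ so small that the three diagonal entries are $<1$, and then $\epsilon_0$ so small that the degree-$>1$ remainders are dominated, this linear system is a strict contraction (its spectral radius being $N\gamma^{2/3}$), leaving the slow pressure term as the only term not of contraction type.

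\textbf{Removing the slow term.} As stressed in the introduction, $A_u^{3/4}+E_u^{3/4}$ is the essential boundary difficulty: since $A^{3/4}+E^{3/4}\le 2\Theta^{3/4}$ and $\Theta^{3/4}\ge\Theta$ for $\Theta\le1$, it cannot be absorbed, and a one-shot inequality $\Theta(\gamma\rho)\le\tfrac12\Theta(\rho)+N\Theta^{3/4}(\rho)$ would only drive $\Theta$ to a small positive constant. Following \cite{Dong_13b}, I would exploit that this term lives only in the pressure equation and involves only the velocity quantities. The pair $\mathcal{V}:=A+E+C^{2/3}$ obeys a slow-term-free triangular recursion in which $D$ enters only through the degree-$7/6$ product $C_u^{1/3}D^{2/3}$; running the contraction for $\mathcal{V}$ (with the $D$-coupling and all other higher-order terms controlled by the uniform smallness and a fixed geometric-decay ansatz) yields $\mathcal{V}(\gamma^{k}\rho_0/4)\le N\nu^{k}$ for some $\nu\in(0,1)$, i.e. $\mathcal{V}(\rho)\le N\rho^{\alpha_1}$. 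Substituting this into \eqref{D_est} turns the slow term into a decaying forcing, $D(\gamma\rho)\le N\gamma^{9/4}D(\rho)+N\rho^{3\alpha_1/4}$, and the elementary iteration lemma for $a_{k+1}\le\mu a_k+N\sigma^{k}$ (with $\mu=N\gamma^{9/4}<1$) gives $D(\rho)\le N\rho^{\alpha_0}$ with $\alpha_0=\min\big(3\alpha_1/4,\ \tfrac94+\tfrac{\log N}{\log\gamma}\big)>0$. Finally $C\le N(A+E)^{3/2}$ from Lemma \ref{interpolation} with $q=3$ lets me pass from $C^{2/3}$ back to $C$, and collecting the two decays yields \eqref{cdf}.

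I expect the main obstacle to be exactly this third step. Because the pressure genuinely appears in the local energy inequality, $\mathcal{V}$ is only \emph{approximately} decoupled from $D$, and one must verify, through weighted Young's inequalities and the uniform bound $\Theta\le N\epsilon_0$, that the cross term $C_u^{1/3}D^{2/3}$ is a fast-decaying perturbation, so that $\mathcal{V}$ decays to zero (rather than to a constant) before its decay is fed into the pressure recursion; this rate bookkeeping is the technical heart of the argument. A secondary point is the uniformity in $z_1$: one must check that all the cutoff- and kernel-dependent constants in Lemmas \ref{eeeeee} and \ref{G_est_lemma} are independent of the boundary base point, which is where the flatness of the half-space model and the zero boundary condition on $u$ are used.
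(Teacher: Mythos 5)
Your setup (reduction to a fixed boundary point via nesting, the identification of the three recursions, and the observation that the term $N\gamma^{9/4}(A_u^{3/4}+E_u^{3/4})$ in \eqref{D_est} is the obstruction) matches the paper. But the way you propose to remove the slow term — first run a contraction for $\mathcal{V}=A+E+C^{2/3}$ alone to get $\mathcal{V}(\rho)\le N\rho^{\alpha_1}$, then feed this into the pressure recursion — does not close, and the gap is not mere ``rate bookkeeping''. The coupling is circular in a quantitatively fatal way: to make $\Phi=A+E$ contract geometrically with a \emph{fixed} $\gamma$, the cross term $N\gamma^{-3}C_u^{1/3}(\rho)D^{2/3}(\rho)\le N\gamma^{-3}\Phi^{1/2}(\rho)D^{2/3}(\rho)$ in \eqref{ers} must be $o(\Phi(\rho))$, which forces $D(\rho)\ll\Phi^{3/4}(\rho)$ with a constant beating $\gamma^{-3}$; but the forcing $N\gamma^{9/4}\Phi^{3/4}(\rho)$ in \eqref{D_est} caps the achievable decay of $D$ at $N\Phi^{3/4}$ with a constant of order $N\gamma^{9/4}$, and the compatibility condition $(N\gamma^{9/4})^{2/3}\lesssim\gamma^{3}$ is violated for every $\gamma\in(0,1)$. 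So with a fixed ratio $\gamma$ the simultaneous ansatz $\Phi\sim\rho^{a}$, $D\sim\rho^{b}$ requires both $b>3a/4$ (to close the energy recursion) and $b\le 3a/4$ (imposed by the slow pressure term) — your $\mathcal{V}$ does not decay to zero before being fed into the pressure recursion; it stalls at a positive constant, exactly the failure mode you yourself describe for $\Theta$.

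What the paper does instead, and what your proposal is missing, is a \emph{non-geometric} iteration: the scales are $\rho_k=\rho_1^{(1+\beta)^k}$ with $\beta=1/200$, so the ratio $\gamma=\rho_k^{\beta}$ itself tends to zero. Then $\Phi$ and $D$ are driven \emph{simultaneously} to the common rate $\rho_k^{1/10}$: the energy step uses \eqref{ers} with $\gamma=\rho_k^{\beta}$, where the loss $\gamma^{-3}=\rho_k^{-3\beta}$ costs only $3\beta$ in the exponent, which is affordable provided $\beta<1/186$ or so; and — the second device you are missing — $D(\rho_{k+1})$ is estimated from \eqref{D_est} applied at scale $\rho_{k-2}$ (three steps back, with $\tilde\beta=(1+\beta)^3-1\approx 3\beta$), so that the gain $\gamma^{9/4}=\rho_{k-2}^{9\tilde\beta/4}$ is large enough to beat the $3/4$-power loss in $\Phi^{3/4}$, which a single step with exponent gain $9\beta/4$ cannot do for any $\beta$ compatible with the energy step. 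The window $0.0039\lesssim 3\beta$ and $\beta\lesssim 0.0054$ is nonempty precisely because of this three-step-back trick. Your fallback iteration lemma for $a_{k+1}\le\mu a_k+N\sigma^k$ is the right elementary tool, but it is applied inside this variable-$\gamma$ induction, not after a (nonexistent) standalone decay of $\mathcal{V}$.
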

\begin{proof}
Let $\epsilon'>0$ be a small constant to be specified later. Due to \eqref{AE_est_2} and \eqref{cond_cdf}, one can find $\epsilon_0=\epsilon_0(\epsilon')>0$ sufficiently small such that,
\begin{equation*}
A(\rho_0/2)+E(\rho_0/2)\leq \epsilon',\quad D(\rho_0/2) \leq \epsilon'.
\end{equation*}
Without loss of generality, we can assume that $\rho_0=\epsilon'$. If $\rho_0\neq \epsilon'$, since $C$, and $D$  are invariant under the natural scaling, we can get \eqref{cdf} with $N$ proportional to $\rho_0^{-\alpha_0}$ after a scaling.

By \eqref{hhh}, we have
\begin{equation*}
C(\rho_0/2)\leq N\epsilon'^{3/2}.
\end{equation*}

For any $z_1 \in Q(z_0,\rho_0/4)\cap (\partial\Omega\times(t_0-\rho_0^2/16,t_0))$, by using
\begin{equation*}
Q^{+}(z_1,\rho_0/4)\subset Q^{+}(z_0,\rho_0/2)\subset Q_T,
\end{equation*}
we can get
\begin{equation*}
A(\rho_1,z_1)+E(\rho_1,z_1)+C^{2/3}(\rho_1,z_1)+D(\rho_1,z_1) \leq N\epsilon'
\end{equation*}
with $\rho_1=\rho_0/4$.

Next, we shall prove inductively that
\begin{equation}
A(\rho_k,z_1)+E(\rho_k,z_1)+C^{2/3}(\rho_k,z_1)\le \rho_k^{\frac{1}{10}},
\quad D(\rho_k,z_1) \leq \rho_k^{\frac{1}{10}},
\label{pop}
\end{equation} where $\rho_k=\rho_1^{(1+\beta)^k}$ and $\beta=\frac{1}{200}$ for $k=1,2,\cdots$.

It is easy to see that \eqref{pop} holds for $k=1,2,3$ by choosing $\epsilon'$ sufficiently small.
Suppose that \eqref{pop} holds for $k\ge 3$. Since $\rho_{k+1}=\rho_{k}^{1+\beta}$, by using \eqref{ers} with $\gamma=\rho_k^\beta$ and $\rho=\rho_k$, we have

\begin{align}
\nonumber A(\rho_{k+1})+E(\rho_{k+1}) &\leq N \rho_{k}^{2\beta}A(\rho_k)+N \rho_k^{-3\beta}(C(\rho_k)+C^{1/3}(\rho_k)D^{2/3}(\rho_k))\\&\leq N \rho_k^{2\beta+\frac{1}{10}}+N\rho_k^{-3\beta+\frac{7}{60}}.\label{ind1}
\end{align}

Since
$$
\min\{2\beta+\frac{1}{10},-3\beta+\frac{7}{60}\}>\frac{1}{10}(1+\beta),
$$ we have
\begin{align*}
A(\rho_{k+1})+E(\rho_{k+1})\leq N\rho_{k+1}^{\frac{1}{10}+\xi}\,\, \text{for some}\,\, \xi>0,
\end{align*}
and by \eqref{hhh},
\begin{equation*}
C(\rho_{k+1})\leq N\rho_{k+1}^{\frac{3}{20}+\frac{3}{2}\xi}.
\end{equation*}
Here $N$ is a constant independent of $k$ and $\xi$.
By taking $\epsilon'$ sufficiently small that $N\epsilon'^{\xi} < 1$, we can obtain
\begin{align}
\label{ind2}
A(\rho_{k+1})+E(\rho_{k+1})+C^{2/3}(\rho_{k+1})\leq \rho_{k+1}^{\frac{1}{10}}.
\end{align}

To estimate the remaining term $D(\rho_{k+1})$,
we apply \eqref{D_est}. It turns out that, different from above, using the estimates of $A(\rho_{k})$, $E(\rho_k)$, and $D(\rho_k)$, one cannot get the estimate of $D(\rho_{k+1})$ as claimed. Instead, we shall bound $D(\rho_{k+1})$ by using the estimates which we get in the $k-2$-th step. By defining $\tilde \beta=(1+\beta)^3-1$ and using \eqref{D_est} with $\gamma=\rho_{k-1}^{\tilde \beta}$ and $\rho=\rho_{k-2}$, we can get
\begin{align*}
D(\rho_{k+1})&\leq N \rho_{k-2}^{-3{\tilde \beta}}E_u(\rho_{k-2})A_u^{1/2}(\rho_{k-2})+N\rho_{k-2}^{-3{\tilde \beta}}E_H(\rho_{k-2})A_H^{1/2}(\rho_{k-2})\\&\quad+ N\rho_{k-2}^{9\tilde \beta/4}D(\rho_{k-2})+N\rho_{k-2}^{9\tilde \beta/4}(A_u^{3/4}(\rho_{k-2})+E_u^{3/4}(\rho_{k-2}))\\&\leq N\rho_{k-2}^{-3\tilde \beta+\frac{3}{20}}+N\rho_{k-2}^{\frac{9}{4}\tilde \beta+\frac{1}{10}}+N\rho_{k-2}^{\frac{9}{4}\tilde \beta+\frac{3}{40}}.
\end{align*}
Since $\min\{-3\tilde \beta+\frac{3}{20},\frac{9}{4}\tilde \beta+\frac{1}{10},\frac{9}{4}\tilde \beta+\frac{3}{40}\}>\frac{1}{10}(1+\beta)^3$, we have
\begin{align*}
D(\rho_{k+1})\leq \rho_{k+1}^{\frac{1}{10}}
\end{align*}
by taking $\epsilon'$ sufficiently small, but independent of $k$.

Now for any $\rho \in (0,\rho_0/4)$, we can find a positive integer $k$ such that that $\rho_{k+1}\leq \rho < \rho_k$. Therefore,
\begin{align*}
&\quad A(\rho)+E(\rho)+C^{2/3}(\rho)+D(\rho)\\
&\le \rho_k^3 \rho_{k+1}^{-3}\big(A(\rho_k)+E(\rho_k)+C^{2/3}(\rho_k)+D(\rho_k)\big)\\
&\le 2\rho_k^{\frac 1 {10}-3\beta}\leq 2 \rho^{\frac{1}{1+\beta}(\frac 1 {10}-3\beta)}.
\end{align*}
By choosing $\alpha_0=\frac{1}{1+\beta}(\frac 1 {10}-3\beta)$, the lemma is proved.
\end{proof}

\subsection{Step 3}
\label{boot}
In the final step, we shall use a bootstrap argument to successively improve the decay estimate \eqref{cdf}. However, as we will show below, the bootstrap argument itself only gives the decay of $E_u(\rho)$ no more than $\rho^{2}$, which is not enough for the H\"{o}lder regularity of $u$ since the spatial dimension is four (so that we need the decay exponent $2+\delta$ according to Campanato's characterization of H\"older continuous functions). We shall use parabolic regularity to fill in this gap.

First we prove Theorem \ref{th2}. We begin with the bootstrap argument. We will choose an increasing sequence of real numbers $\{\alpha_k\}_{k=1}^{m}\in (\alpha_0,2)$.

Under the condition \eqref{cond_cdf}, we claim that the following estimates hold uniformly for all $\rho >0$ sufficiently small and $z_1 \in Q(z_0,\rho_0/4)\cap (\partial\Omega\times(t_0-\rho_0^2/16,t_0))$ over the range of $\{\alpha_k\}_{k=0}^m$:
\begin{align}
\label{esti}
&A(\rho,z_1)+E(\rho,z_1)\leq N \rho^{\alpha_k},\,\, C(\rho,z_1)\leq N\rho^{3\alpha_k/2},\\
\label{esti_D}
&D(\rho,z_1)\leq N\rho^{5\alpha_k/6}.
\end{align}
We prove this via iteration. The $k=0$ case for \eqref{esti} and \eqref{esti_D} was proved in \eqref{cdf} with a possibly different exponent $\alpha_0$.
Now suppose that \eqref{esti} and \eqref{esti_D} hold with the exponent $\alpha_k$. We first estimate $A(\rho,z_1)$ and $E(\rho,z_1)$. Let $\rho=\tilde{\gamma}\tilde{\rho}$ where $\tilde{\gamma}=\rho^{\mu}$, $\tilde{\rho}=\rho^{1-\mu}$ and $\mu \in (0,1)$ to be determined. We use \eqref{ers}, and \eqref{esti} to obtain
\begin{equation*}
A(\rho)+E(\rho) \leq N \rho^{2\mu}\rho^{\alpha_k(1-\mu)}+N \rho^{-3\mu}\rho^{\frac{19}{18}\alpha_k(1-\mu)}.
\end{equation*}
Choose $\mu=\dfrac{\alpha_k}{90+\alpha_k}$. Then \eqref{esti} is proved for $A(\rho)+E(\rho)$ with the exponent
\begin{align*}
\alpha_{k+1}&:=\min\Big\{2\mu+\alpha_k(1-\mu),
\frac{19}{18}\alpha_k(1-\mu)-3\mu\Big\}\\
&=\dfrac{92}{90+\alpha_k}\alpha_k \in (\alpha_k,2).
\end{align*}
The estimate in \eqref{esti} with $\alpha_{k+1}$ in place of $\alpha_k$ for $C(\rho,z_1)$ follows from \eqref{hhh} immediately.
To prove the estimate in \eqref{esti_D} with $\alpha_{k+1}$ we will use \eqref{D_est}. Let $\rho=\tilde{\gamma}\tilde{\rho}$, where $\tilde{\gamma}=\rho^{\mu}$ and $\tilde{\rho}=\rho^{1-\mu}$ with a constant $\mu \in (0,1)$ to be specified. From \eqref{D_est}, \eqref{esti} with $\alpha_{k+1}$ in place of $\alpha_k$,and \eqref{esti_D}, we have
\begin{align*}
D(\rho) &\leq N\big[\rho^{-3\mu+\frac{3}{2}\alpha_{k+1}(1-\mu)}+\rho^{9\mu/4+\frac{5}{6}\alpha_{k}(1-\mu)}+\rho^{9\mu/4+\frac{3}{4}\alpha_{k+1}(1-\mu)}\big].
\end{align*}
Choose $\mu=\dfrac{\alpha_{k+1}}{7+\alpha_{k+1}}$. Then we can get
\begin{align*}
&\min\{-3\mu+\frac{3}{2}\alpha_{k+1}(1-\mu),9\mu/4+\frac{5}{6}\alpha_{k}(1-\mu),9\mu/4+\frac{3}{4}\alpha_{k+1}(1-\mu)\}\\&=\frac{15\alpha_{k+1}}{14+2\alpha_{k+1}},
\end{align*}
and
\begin{align*}
D(\rho)\leq N\rho^{\frac{15\alpha_{k+1}}{14+2\alpha_{k+1}}}\leq N\rho^{5\alpha_{k+1}/6}
\end{align*}
since $\alpha_{k+1}\in(0,2)$.
Moreover,
\begin{equation*}
2-\alpha_{k+1}=\dfrac{90}{90+\alpha_k}(2-\alpha_k) \leq \dfrac{90}{90+\alpha_0}(2-\alpha_k).
\end{equation*}
Thus, for any sufficiently small $\delta$, we can find a $m$ that $\alpha_m\geq 2-\delta$.

Via the bootstrap argument, we have got the following estimates for all $\rho >0$ sufficiently small and $z_1=(x_1,t_1) \in Q(z_0,\rho_0/4)\cap (\partial\Omega\times(t_0-\rho_0^2/16,t_0))$:
\begin{equation}
\sup_{t_1-\rho^2 \leq t\leq t_1}\int_{B^{+}(x_1,\rho)}|u(x,t)|^2+|H(x,t)|^2\,dx \leq N \rho^{4-\delta},
\label{pp}
\end{equation}
\begin{equation}
\int_{Q^{+}(z_1,\rho)}|\Pi-[\Pi]_{x_1,\rho}|^{3/2} \,dz \leq N \rho^{3+\frac{5}{6}(2-\delta)},
\label{ppp}
\end{equation}
\begin{equation}
\int_{Q^{+}(z_1,\rho)}|u|^3+|H|^3 \,dz \leq N \rho^{3+\frac{3}{2}(2-\delta)}.
\label{pppp}
\end{equation}

Now we rewrite \eqref{ns} (in the weak sense) into
\begin{equation}
\begin{aligned}
\label{eq11.55}
&\partial_t u_i- \Delta u_i=-\partial_j(u_iu_j)-\partial_i \Pi+\partial_j(H_iH_j),\\
&\partial_t H_i- \Delta H_i=-\partial_j(u_jH_i)+\partial_j(u_iH_j).
\end{aligned}
\end{equation}

Finally, we use the parabolic regularity theory to improve the decay estimate of mean oscillations of $u$ and then complete the proof.
Due to \eqref{pp} and \eqref{pppp}, there exists $\rho_1 \in (\rho/2,\rho)$ such that
\begin{equation}
\begin{aligned}
\int_{B^{+}(x_1,\rho_1)}|u(x,t_1-\rho_1^2)|^2+|H(x,t_1-\rho_1^2)|^2\,dx &\leq N \rho^{4-\delta},\\
\,\,\int_{t_1-\rho_1^2}^{t_1}\int_{S^{+}(x_1,\rho_1)}|u|^3+|H|^3 \,dx\,dt &\leq N \rho^{2+\frac{3}{2}(2-\delta)}.
\end{aligned}
\label{oo}
\end{equation}
Let $v, h$ be the unique weak solution to the heat equation
\begin{align*}
\partial_t v-\Delta v=0 \ \ \text{in} \ \ Q^{+}(z_1,\rho_1),\\
\partial_t h-\Delta h=0 \ \ \text{in} \ \ Q^{+}(z_1,\rho_1),
\end{align*}
with the boundary condition $v=u, h=H$ on $\partial_p Q^{+}(z_1,\rho_1)$. Since $v=0$, $h\cdot \nu=0, (\nabla\times h)\times h=0$ on the flat boundary part, it follows from the standard estimates for the heat equation, H\"{o}lder's inequality, and \eqref{oo} that
\begin{align}
 \sup_{Q^{+}(z_1,\rho_1/2)}(|\nabla v|&+|\nabla h|) \leq N \rho_1^{-6}\int_{t_1-\rho_1^2}^{t_1}\int_{S^{+}(x_1,\rho_1)}(|v|+|h|)\,  dx\,dt\nonumber\\
 &+N\rho_1^{-5}\int_{B^{+}(x_1,\rho_1)}(|v(x,t_1-\rho_1^2)|+|h(x,t_1-\rho_1^2)|)\,dx\nonumber\\
 &\leq N \rho^{-1-\delta/2}.
\label{uu}
\end{align}

Denote $w=u-v, \tilde{h}=H-h.$ Then $w,\tilde{h}$ satisfies the linear parabolic equation
\begin{equation*}
\begin{aligned}
&\partial_t w_i -\Delta w_i = -\partial_j(u_iu_j)-\partial_i (\Pi-[\Pi]_{x_1,\rho})+\partial_j(H_iH_j),\\
&\partial_t \tilde{h}_i -\Delta \tilde{h}_i = -\partial_j(H_iu_j)+\partial_j(u_iH_j).
\end{aligned}
\end{equation*}
with the zero boundary condition. By the classical $L_p$ estimate for  parabolic equations, we have
\begin{align*}
\nonumber\|\nabla w\|_{L_{3/2}(Q^{+}(z_1,\rho_1))} &\leq N \||u|^2\|_{L_{3/2}(Q^{+}(z_1,\rho_1))}+N \||H|^2\|_{L_{3/2}(Q^{+}(z_1,\rho_1))} \\&\quad+N \|\Pi-[\Pi]_{x_1,\rho}\|_{L_{3/2}(Q^{+}(z_1,\rho_1))},
\end{align*}
and
\begin{align*}
\nonumber\|\nabla \tilde{h}\|_{L_{3/2}(Q^{+}(z_1,\rho_1))} &\leq N \||u||h|\|_{L_{3/2}(Q^{+}(z_1,\rho_1))},
\end{align*}
which together with \eqref{ppp}, \eqref{pppp}, and the condition $f\in L_{6}$ yields
\begin{equation}
                        \label{eq10.42}
                        \begin{aligned}
&\int_{Q^{+}(z_1,\rho_1)}|\nabla w|^{3/2}\,dz\le N\rho^{3+5(2-\delta)/6},\\
&\int_{Q^{+}(z_1,\rho_1)}|\nabla \tilde{h}|^{3/2}\,dz\le N\rho^{3+3(2-\delta)/2}.
\end{aligned}
\end{equation}
Since $|\nabla u|\leq |\nabla w|+|\nabla v|$, $|\nabla H|\leq |\nabla h|+|\nabla \tilde{h}|$, we combine \eqref{uu} and \eqref{eq10.42} to obtain, for any $r \in (0,\rho/4)$, that
\begin{equation*}
\begin{aligned}
&\int_{Q^{+}(z_1,r)}|\nabla u|^{3/2} \,dz \leq N \rho^{3+5(2-\delta)/6}+ r^{6}\rho^{-3/2-\frac{3}{4}\delta},\\
&\int_{Q^{+}(z_1,r)}|\nabla H|^{3/2} \,dz \leq N \rho^{3+3(2-\delta)/2}+ r^{6}\rho^{-3/2-\frac{3}{4}\delta}.
\end{aligned}
\end{equation*}
Upon taking $\delta = \frac{1}{20}$ and $r=\rho^{1000/973}/4$ (with $\rho$ small), we deduce
\begin{equation}
                                \label{eq11.50}
\int_{Q^{+}(z_1,r)}|\nabla u|^{3/2}+|\nabla H|^{3/2}\,dz \leq N r^{q},
\end{equation}
where
\begin{equation*}q=\frac{36001}{8000}>6-\frac 3 2.
\end{equation*}

Since $u\in \cH^1_{3/2}$ is a weak solution to \eqref{eq11.55}, it then follows from Lemma \ref{lem11.31}, \eqref{eq11.50}, \eqref{ppp} and \eqref{pppp} with $r$ in place of $\rho$ that
\begin{align*}
\nonumber &\quad\int_{Q^{+}(z_1,r)}|u-(u)_{z_1,r}|^{3/2}\,dz \\
\nonumber &\leq N r^{3/2}\int_{Q^{+}(z_1,r)}\big|\nabla u|^{3/2}+(|u|^2)^{3/2}+(|H|^2)^{3/2}+|\Pi-[\Pi]_{x_1,r}|^{3/2}\big)\,dz \\
\nonumber &\leq N r^{q+3/2}.
\end{align*}
and
\begin{align*}
\nonumber &\quad\int_{Q^{+}(z_1,r)}|H-(H)_{z_1,r}|^{3/2}\,dz \\
\nonumber &\leq N r^{3/2}\int_{Q^{+}(z_1,r)}\big|\nabla H|^{3/2}+(|u||H|)^{3/2}\big)\,dz \\
\nonumber &\leq N r^{q+3/2}.
\end{align*}

We also can prove the above inequality for the interior point under the assumption of Proposition \ref{prop2} by our method. Then
by Campanato's characterization of H\"older
continuous functions near a flat boundary (see, for instance, \cite[Lemma 4.11]{Lieb_96}), that $u, H$ are H\"{o}lder continuous in a neighborhood of $z_0$. This completes the proof of Theorem \ref{th2}.

Theorem \ref{lpq} then follows from Theorem \ref{th2} by applying Proposition \ref{prop3} and Theorem \ref{mainthm} then follows from Theorem \ref{th2} by applying Proposition \ref{prop1}. Finally, we can prove that Theorem \ref{lpq}, \ref{mainthm} also holds for a $C^2$ domain similarly by following the argument in \cite{Seregin_06, Mikhailov_11}. Lastly, under the assumption of Remark \ref{rm1}, the same $\epsilon$-regularity criteria can be proved for interior point with a similar method in \cite{Dong_13b}, then Theorem \ref{th3} is deduced by using the standard argument in the geometric measure theory, which is explained for example in \cite{CKN_82}.

\section*{Acknowledgements.} The author would like to thank Professor Hongjie Dong for his directions and stimulating discussions on this topic. The author also wants to thank Dr. Wendong Wang for helpful discussions and comments. X. Gu was sponsored by the China Scholarship Council for one year study at Brown University and was partially supported by the NSFC (grant No. 11171072) and the the  Innovation  Program  of  Shanghai  Municipal  Education  Commission (grant No. 12ZZ012).


\begin{thebibliography}{99}
\bibitem{CKN_82}
\newblock L. Caffarelli, R. Kohn, L. Nirenberg,
\newblock Partial regularity of suitable weak solutions of the Navier--stokes equations,
\newblock \textit{Comm. Pure Appl. Math.} \textbf{35} (1982), 771--831.
\bibitem{Cao_10}
\newblock C. Cao, J. Wu,
\newblock Two regularity criteria for the 3D MHD equations,
\newblock \textit{J. Differential Equations.} \textbf{248} (2010), 2263每-2274.
\bibitem{Chen_07}
\newblock Q. Chen, C. Miao, and Z. Zhang, The Beale-Kato-Majda criterion to the 3D magneto-
hydrodynamics equations,
\newblock \textit{Comm. Math. Phys.} \textbf{275} (2007), 861每-872.
\bibitem{Chen_08}
\newblock Q. Chen, C. Miao, and Z. Zhang,
\newblock On the regularity criterion of weak solution for the 3D
viscous magneto-hydrodynamics equations,
\newblock \textit{Comm. Math. Phys.}\textbf{284} (2008), 919每-930.
\bibitem{David_01}
\newblock P. Davidson,
\newblock An introduction to magnetohydrodynamics, Cambridge University Press, Cambridge, 2001.

\bibitem{Dong_07}
\newblock H. Dong, D. Du,
\newblock Partial regularity of solutions to the four-dimensional Navier--Stokes equations at the first blow-up time,
\newblock \textit{Comm. Math. Phys.} \textbf{273} (2007), no. 3, 785--801.

\bibitem{Dong_12}
\newblock H. Dong, R. Strain,
\newblock On partial regularity of steady-state solutions the 6D Navier--Stokes Equations,
\newblock to appear in \textit{Indiana Univ. Math. J.} (2011).

\bibitem{Dong_13}
\newblock H. Dong, X. Gu,
\newblock Partial Regularity of solutions to the Four-Dimensional Navier-Stokes Equations,
\newblock arXiv:1302.1443.

\bibitem{Dong_13b}
\newblock H. Dong, X. Gu,
\newblock Boundary partial regularity for the high dimensional Navier-Stokes equations,
\newblock arXiv:1309.3348.

\bibitem{Duvant_72}
\newblock G. Duvaut, J. Lions,
\newblock In\'equations en thermo\'elasticit\'e et magn\'etohydrodynamique (French),
\newblock \textit{Arch. Rational Mech. Anal.}, \textbf{46} (1972), 241每-279.

%
%
%
%

\bibitem{Gus_07}
\newblock S. Gustafson, K. Kang, T. Tsai, \newblock Interior regularity criteria for suitable weak solutions of the Navier--Stokes equations,
\newblock \textit{Comm. Math. Phys.} \textbf{273} (2007), no. 1, 161--176.

\bibitem{Han_12}
\newblock P. Han, C.He,
\newblock Partial regularity of suitable weak solutions to the four-dimensional incompressible
magneto-hydrodynamic equations,
\newblock \textit{Mathematical Methods in the Applied Sciences.} \textbf{35} (2012), no. 11, 1335每-1355

\bibitem{He_05}
\newblock C. He, Z. Xin,
\newblock Partial regularity of suitable weak solutions to the incompressible magnetohydrodynamic equations,
\newblock \textit{J.Funct.Anal.}
\textbf{227}(2005),no.1,113--152.



\bibitem{Kang_09}
\newblock K. Kang, J. Lee,
\newblock Interior regularity criteria for suitable weak solutions of the magneto-
hydrodynamics equations,
\newblock \textit{J. Differential Equations.} \textbf{247} (2009), 2310每-2330

\bibitem{kang_12}
\newblock K. Kang, J. Kim
\newblock Boundary regularity criteria for suitable weak solutions of the magnetohydrodynamic equations,
\newblock arxiv:1208.4928

\bibitem{Krylov_05}
\newblock N. Krylov,
\newblock Parabolic and elliptic
equations with VMO coefficients,
\newblock \textit{Comm. Partial Differential
Equations} \textbf{32} (2007), no. 1-3, 453--475.


\bibitem{Lady_99}
\newblock O. Ladyzhenskaya, G. Seregin,
\newblock On the partial regularity of suitable weak solutions to the three--dimensional Navier--Stokes equations,
\newblock \textit{J. Math. Fluid Mech.} \textbf{1} (1999), 356--387.

\bibitem{Lieb_96}
\newblock G. Lieberman,
\newblock \textit{Second order parabolic differential equations}, World Scientifc Publishing Co., Inc., River Edge, NJ, 1996.

\bibitem{Lin_98}
\newblock F. Lin,
\newblock A new proof of the Caffarelli--Kohn--Nirenberg theorem,
\newblock \textit{Comm. Pure. Appl. Math.} \textbf{51} (1998), 241--257.

\bibitem{Mikhailov_11}
\newblock A. Mikhailov,
\newblock On local regularity for suitable weak solutions of the Navier--Stokes equations near the boundary,
\newblock \textit{J. Math. Sci (N. Y.)} \textbf{178}, (2011), no. 3, 282--291.

\bibitem{MaSo94}
\newblock P. Maremonti, V. Solonnikov,
\newblock On the estimate of solutions of evolution Stokes problem in anisotropic Sobolev spaces with a mixed norm,
\newblock \textit{Zap. Nauchn. Sem. LOMI} \textbf{223} (1994), 124--150.

\bibitem{Scheffer_76}
\newblock V. Scheffer,
\newblock Partial regularity of solutions to the Navier--Stokes equations,
\newblock \textit{Pacific J. Math.} \textbf{66} (1976), 532--522.

\bibitem{Scheffer_77}
\newblock V. Scheffer,
\newblock Hausdorff measure and the Navier--Stokes equations,
\newblock \textit{Comm. Math. Phys.} \textbf{55} (1977), 97--112.


\bibitem{Sch82}
\newblock V. Scheffer,
\newblock Boundary regularity for the Navier--Stokes equations.
\newblock \textit{Comm. Math. Phys.} \textbf{85} (1982), no. 2, 275--299.

\bibitem{Seregin_02}
\newblock G. Seregin,
\newblock Local regularity of suitable weak solutions to the Navier--Stokes equations near the boundary,
 \newblock \textit{J. Math. Fluid Mech.} \textbf{4} (2002), 1--29.


\bibitem{Seregin_03}
\newblock G. Seregin,
\newblock Some estimates near the boundary for solutions to the nonstationary linearized Navier--Stokes equations,
 \newblock \textit{Journal of Mathematical Sciences} \textbf{115} (2003), no. 6, 2820--2831.

\bibitem{Seregin_06}
\newblock G. Seregin,
\newblock Estimates of suitable weak solutions to the Navier--Stokes equations in critical Morrey spaces,
 \newblock \textit{Zapiski Nauchn. Seminar POMI} \textbf{336} (2006), 199--210.



\bibitem{Seregin_09}
\newblock G. Seregin,
\newblock A note on local boundary regularity for the Stokes system,
\newblock \textit{J. Math. Sci. (N. Y.)} \textbf{166} (2010), no. 1, 86--90.

\bibitem{Sermange_83}
\newblock M. Sermange, R. Temam,
\newblock Some mathematical questions related to the MHD equations,
\newblock \textit{Comm. Pure Appl. Math.}\textbf{36} (1983), no. 5, 635每-664.

\bibitem{struwe_88}
\newblock M. Struwe,
\newblock On partial regularity results for the Navier--Stokes equations,
\newblock \textit{Comm. Pure Appl. Math.} \textbf{41} (1988), no. 4, 437--458.


\bibitem{Tian_99}
\newblock G. Tian, Z. Xin,
\newblock Gradient estimation on Navier--Stokes equations,
\newblock \textit{Comm. Anal. Geom.} \textbf{7} (1999), no. 2, 221--257.



\bibitem{Vasseur_07}
\newblock A. Vasseur,
\newblock A new proof of partial regularity of solutions to Navier--Stokes
equations,
\newblock \textit{NoDEA Nonlinear Differential Equations Appl.} \textbf{14} (2007), no. 5-6, 753--785.


\bibitem{Wolf_11}
\newblock J. Wolf,
\newblock On the boundary regularity of suitable weak solutions to the Navier--Stokes equations,
\newblock \textit{Ann. Univ. Ferrara Sez. VII Sci. Mat.} \textbf{56} (2010), no. 1, 97每-139.

\bibitem{Wang_13}
\newblock W. Wang, Z. Zhang,
\newblock On the interior regularity criteria for suitable weak solutions of the magnetoydrodynmamics equations,
\newblock \textit{SIAM J. MATH. ANAL.} \textbf{45} (2013), No. 5, 2666每-2677.

\bibitem{WW13}
Y. Wang, G. Wu,
A unified proof on the partial regularity for suitable weak solutions of non-stationary and stationary Navier--Stokes equations.
\textit{J. Differential Equations} \textbf{256} (2014), no. 3, 1224每1249.

\bibitem{Wu_02}
\newblock J. Wu,
\newblock Bounds and new approaches for the 3D MHD equations,
\newblock \textit{J. Nonlinear Sci.} \textbf{12} (2002), 395每-413.
\bibitem{Wu_04}
\newblock J. Wu,
\newblock Regularity results for weak solutions of the 3D MHD equations,
\newblock \textit{Discrete Contin. Dyn.Syst.} \textbf{10} (2004), 543每-556.

\bibitem{Zhou_05}
\newblock Y. Zhou,
\newblock Remarks on regularities for the 3D MHD equations,
\newblock \textit{Discrete Contin. Dyn. Syst.}, \textbf{12} (2005), 881每-886.

\end{thebibliography}
\end{document}